\newtheorem{thm}{Theorem}[section]
\newtheorem{lemm}[thm]{Lemma}
\newtheorem{rem}[thm]{Remark}
\newtheorem{cor}[thm]{Corollary}
\newtheorem{assum}[thm]{Assumption}
\theoremstyle{definition}
\def\Wick#1{\mathord{{:}{#1}{:}}}
\def\scal#1{\langle #1\rangle}
\def\one{\mathbf{1}}
\newcommand{\eqdef}{\stackrel{\mbox{\tiny\rm def}}{=}}
\let\coloneqq\eqdef
\colorlet{RED}{red!90!black}
\colorlet{darkblue}{blue!90!black}
\colorlet{darkred}{red!90!black}
\def\CP{\mathcal{P}}
\def\D{\mathcal{D}}
\def\C{\mathbb{C}}
\def\Z{\mathbb{Z}}
\def\N{\mathbb{N}}
\def\E{\mathbb{E}}
\def\R{\mathbb{R}}
\def\P{\mathbb{P}}
\def\CX{\mathcal{X}}
\def\dash{\leavevmode\unskip\kern0.18em--\penalty\exhyphenpenalty\kern0.18em}
\def\slash{\leavevmode\unskip\kern0.15em/\penalty\exhyphenpenalty\kern0.15em}
\DeclareRobustCommand{\TitleEquation}[2]{\texorpdfstring{\StrLeft{\f@series}{1}[\@firstchar]$\if%
		b\@firstchar\boldsymbol{#1}\else#1\fi$}{#2}}
\begin{document}

\title{Singularity of solutions to singular SPDEs}

\author{Martin Hairer\orcidlink{0000-0002-2141-6561}}
\address{EPFL, Lausanne, Switzerland and Imperial College London, UK}
\email{martin.hairer@epfl.ch}

\author{Seiichiro Kusuoka\orcidlink{0000-0001-6961-5036}}
\address{Graduate School of Science, Kyoto University, Kitashirakawa-Oiwakecho, Sakyo-ku, Kyoto 606-8502, Japan}
\email{kusuoka@math.kyoto-u.ac.jp}

\author{Hirotatsu Nagoji}
\address{Graduate School of Science, Kyoto University, Kitashirakawa-Oiwakecho, Sakyo-ku, Kyoto 606-8502, Japan}
\email{nagoji.hirotatsu.63x@st.kyoto-u.ac.jp}

\begin{abstract} 
Building on the notes \cite{HaiOrthogonal}, we give a sufficient condition for the marginal distribution of the solution of singular SPDEs on the $d$-dimensional torus ${\mathbb T}^d$ to be singular with respect to the law of the Gaussian measure induced by the linearised equation.
As applications we obtain the singularity of the $\Phi ^4_3$-measure with respect to the Gaussian free field measure and the border of parameters for the fractional $\Phi ^4$-measure to be singular with respect to the Gaussian free field measure.
Our approach is applicable to quite a large class of singular SPDEs.
\end{abstract}

\maketitle
\tableofcontents
\thispagestyle{empty}

\section{Introduction}

We give a sufficient\dash and ``almost'' necessary, see Remark~\ref{rem:trickGirsanov}\dash condition for the marginal 
distribution of the solution to nonlinear singular SPDEs on the $d$-dimensional torus ${\mathbb T}^d$ to be singular with respect 
to the (Gaussian) law of the corresponding free evolution.
Our condition is applicable to the $\Phi ^4_3$-quantum field model and also the fractional $\Phi ^4$-quantum field models.

We consider singular SPDEs of the form
\begin{equation}\label{koj}
\mathcal{L} u + F(u) = \xi \quad \mbox{on}\ \mathbb{R}_+ \times \mathbb{T}^d = \mathbb{R}_+ \times (\mathbb{R}/2\pi\mathbb{Z})^d,
\end{equation}
where $\mathcal{L}$ is a parabolic operator and $\xi$ is a Gaussian random noise.
In this paper, we consider the case when $\mathcal{L} = \partial_t + \scal{\nabla}^\sigma$, $\xi = \langle \nabla \rangle ^m \eta$ and $F(u) =\langle \nabla \rangle^{n_0}\prod_{i=1}^k  \left(\langle \nabla \rangle^{n_i} u\right)$ with $n_i\in\mathbb{R}$, where $\langle \nabla \rangle = (1-\Delta )^{\frac{1}{2}}$ and $\eta$ denotes Gaussian space-time white noise.
This class of equations is slightly different from those that are usually considered in this context since it 
involves $\langle \nabla \rangle$ rather than $\nabla$, so our equation is non-local unless these exponents are all 
even integers. Because of the difference, the KPZ equation is not included in our setting.
For technical reasons we replace $\nabla$ by $\langle \nabla \rangle$,
see Remark~\ref{rem:KPZcase} for more details.

When $\mathcal{L} = \partial_t + 1 - \Delta$, $F(u)= \lambda u^3$ with $\lambda>0$, and $\xi$ is space-time white noise, this equation is called the dynamical $\Phi^4_d$ model (on the torus), which 
is a singular SPDE that is subcritical in the sense of regularity structures when $d \in [2,4)$. The associated invariant measure 
is the classical $\Phi^4_d$ model which is formally given by
\begin{equ}[e:Phi4]
\mu(d\phi) \propto e^{-\frac{\lambda}{4} \int_{\mathbb{T}^d} \phi^4(x) dx} \mu_0(d\phi), 
\end{equ}
where $\mu_0$ denotes Nelson's massive Gaussian free field,  i.e.\ the Gaussian measure on distributions over
$\mathbb{T}^d$ with covariance $(1-\Delta)^{-1}$.
It is known that the measure $\mu$ (when interpreted with suitable renormalisation) is absolutely continuous with respect to $\mu_0$ when $d=2$ (see e.g. \cite{AR91}), but is singular when $d=3$ (see \cite{HaiOrthogonal, BG21}).
In this article, we study a more general case, namely we compare the marginal distribution of the solution 
to \eqref{koj} (for any fixed time $t>0$ and any initial condition)
with the reference Gaussian measure given by the solution to the linear evolution $\mathcal{L}Z = \xi$, and 
we give a sufficient condition for their mutual singularity.

Loosely speaking, the main result of the present paper, Theorem~\ref{new}, suggests 
the following rule of thumb, where one should think of $t>0$ as fixed:
\begin{equation}\label{eq:loose_statement}\begin{array}{c}
\mbox{\it ``If $u(t)-Z(t)$ does not belong to the Cameron--Martin space of the law of $Z(t)$}\\
\mbox{\it almost surely, then the laws of $u(t)$ and $Z(t)$ are mutually singular.''}
\end{array}\end{equation}
Here $u$ is a solution to the (suitably renormalized) equation \eqref{koj} in the sense of singular SPDEs.
We stress that in the decomposition $u(t) = Z(t) + v(t)$ which typically appears in the 
solution theory for singular SPDEs, $Z(t)$ and $v(t)$ are far from being independent, so that 
 \eqref{eq:loose_statement} does not follow from the Cameron--Martin theorem. In 
fact, as we will see in Remark~\ref{rem:KPZcase} below, there are situation where~\eqref{eq:loose_statement} \textit{fails} due to the  
presence of additional cancellations.
One example is that of the KPZ equation where we know \cite{Jeremy} that the invariant measure coincides
with that of the linearised equation. Since that measure is essentially Wiener measure, its
Cameron--Martin space is given by the Sobolev space $H^1$. However, it is possible to show
in this particular example that while the process $u-Z$ does belong to $H^s$ for every $s < 1$,
it almost surely does \textit{not} belong to $H^1$. 
Another example where the heuristic fails is that of the 2D stochastic Navier--Stokes equations driven
by noise with power law spectrum, whose invariant measure was shown to be equivalent to that of the corresponding
Ornstein--Uhlenbeck process \cite{LeonardoJames}. 
The reason why the above heuristics fails in these cases is 
that our proof relies crucially
on the logarithmic divergence of the constant $c^\alpha_{N,2}$ given by \eqref{e:cN2} with $\alpha = \alpha_0$ as in 
\eqref{e:defalpha0}, but this constant happens to be bounded in these cases. This is due to the 
geometric structure of the nonlinearity which ``almost'' preserves the Cameron--Martin norm of the invariant measure.

\begin{rem}\label{rem:trickGirsanov}
The converse of the above rule of thumb, namely that $u(t)-Z(t)$ belonging to the Cameron-Martin 
space of the law of $Z(t)$ implies that the laws of $u(t)$ and $Z(t)$ are mutually equivalent
was shown to hold for a relatively wide class of parabolic SPDEs in \cite{Girsanov,Girsanov2,MRS22}.
The idea developed in these articles is that even though one cannot apply
Girsanov's theorem at fixed time, one can do so in path space. When doing this naively one 
obtains much more stringent criteria for absolute continuity. The trick is to play with Duhamel's
formula in such a way that one constructs an auxiliary process $\hat u^{(t)}$ which is ``closer''
to $Z$ on path space than $u$ and is such that, for the fixed target time $t$, one has
$\hat u^{(t)}(t) = u(t)$ (but $\hat u^{(t)}(s) \neq u(s)$ for $s \in [0,t)$). 
\end{rem}

As an application of our main theorem, we can see that the $\Phi^4_3$-measure on the torus, which appears in the Euclidean quantum filed theory, is singular with respect to the corresponding Gaussian free 
field (see Section~\ref{sec:appPhi4}). Note that this has already been proven in \cite{HaiOrthogonal, BG21} and, in fact, it had been ``known'' to be the case for much longer as a consequence of the 
construction in \cite{Glimm} which showed that the $\Phi^4_3$ Hamiltonian acts on a 
physical Hilbert space of states that is different from the Fock space associated to the GFF.
Our result is also applicable to the fractional $\Phi ^4$-model, and in particular we confirm the 
previously conjectured boundary
of parameters for the fractional $\Phi ^4$-measure to be singular (see Section~\ref{sec:appfrPhi4}).
  
The absolute continuity and singularity of the fractional $\Phi ^4$-measure have already been studied in \cite{liu2024}.
The method applied there relies on the variational representation of the measure, which is very different from 
the method in the present paper.
Indeed, our argument is based on the decomposition typically appearing in the solution theory of singular SPDEs (see Assumption~\ref{katei}) and the failure of the remainder term to belong to the Cameron--Martin space 
(see \eqref{eq:loose_statement} and Remark~\ref{rem:CM}).
We emphasize that the results in \cite{liu2024} and in the present paper are of course consistent with each other (see Corollary~\ref{cor:fPhi4} and Remark~\ref{rem:fPhi4}).

A related model, namely the Hartree-type $\Phi ^4$-measures, was also studied using the variational 
representation of the measures (see \cite[Theorem~1.5]{Bjoern1} and \cite[Theorem~1.12]{OOT}).
One important aspect is that, contrary to these results, the method of proof used here also applies to invariant measures
for singular stochastic PDEs that are not of gradient type, so that no explicit representation of the measure
exists.


The organization of the present paper is as follows.
In Section~\ref{sec:notations} we give the notations which are used in the present paper, and in Section~\ref{se2} we give the main theorems and their proofs.
In the proofs in Section~\ref{se2} we skip explicit calculations by referring lemmas in Section~\ref{sele} and postpone them to Section~\ref{sele}.
In Section~\ref{appli} we give applications of the main results.
The most interesting applications are the $\Phi ^4_3$-model and the fractional $\Phi ^4$-models.
Precisely, we obtain the singularity of the $\Phi ^4_3$-measure obtained by the method of singular SPDEs with respect to the Gaussian free field measure, and also the border of parameters for the fractional $\Phi ^4$-measure to be singular (see Section~\ref{sec:appfrPhi4}).
Unfortunately, our main theorem is not applicable to the KPZ equation.
However, it is applicable to a similar but different equation to the KPZ equation (see Section~\ref{sec:appKPZ}).
In Section~\ref{sele} we prove the lemmas which referred in Section~\ref{se2} with explicit but complicated calculations.

\section{Notations}\label{sec:notations}

We introduce some notations which are used in this paper.
\begin{itemize}
\item
We write $a \lesssim b$ if $a \le Cb$ holds for some constant $C>0$ which is independent of the variables under consideration. We write $a\simeq b$ if one has $a \lesssim b$ and $a \gtrsim b$.
\item We use the conventions $\N = \{1,2,\ldots\}$ and $\N_0 = \N \cup \{0\}$.
We also use the shorthand $\Z^d_N = \{k \in \Z^d\,:\, |k| \le N\}$.
\item
We define the Fourier transform $\hat f$ of a smooth function $f \in \D \coloneqq C^\infty( \mathbb{T}^d)$ by
\begin{equation}
\hat f (l) \coloneqq \langle e_l, f \rangle = (2\pi )^{-\frac{d}{2}} \int_{\mathbb{T}^d} f(x) e^{-i l\cdot x} dx, \ \ \ l\in\mathbb{Z}^d,
\end{equation}
where $\left\{ e_l \right\}_{l\in\mathbb{Z}^d}$ is defined by $e_l (x) \coloneqq (2\pi )^{-\frac{d}{2}} e^{il\cdot x}$ and $\langle \cdot  ,\cdot \rangle$ denotes the inner product on $L^2(\mathbb{T}^d;\mathbb{C})$.
\item
Let $\D'$ be the space of distributions on $\mathbb{T}^d$, i.e.\ the topological dual of $\D$. Then, we extend the above definition of the Fourier transform to $f \in \D'$ in the usual way. We also extend the $L^2$-inner product $\langle \cdot , \cdot \rangle$ to the duality between $\D'$ and $\D$.
\item
For any function $a:\C^d\rightarrow \C$ of at most polynomial growth, we define the Fourier multiplier $a(\nabla)$ by
\begin{equation}
a(\nabla)f \coloneqq \sum_{l \in \Z^d} a (il)  \hat f (l) e_l,\ \ \ f\in \D'\;.
\end{equation}
In the special case when we can write $a(x) = b(|x_1|^2 + \cdots + |x_d|^2)$ for some $b \colon \R \to \C$, we also write $a(\nabla) = b(-\Delta)$. 
\item
We define $\langle \cdot \rangle: \C^d \rightarrow \mathbb{R}_+$ by 
\begin{equation}
\langle x \rangle \coloneqq (|x|^2 + 1)^{\frac{1}{2}}.
\end{equation}
\item
Let $W^{\alpha, p}(\mathbb{T}^d),\ \alpha\in\mathbb{R},\ 1\le p\le \infty$ be the Sobolev space, which is defined by the completion of $\D$ under the norm
\[   \| f \|_{W^{\alpha, p}(\mathbb{T}^d)} \coloneqq \|  \langle \nabla \rangle^\alpha f \|_{L^p(\mathbb{T}^d)} =  \|  (1-\Delta)^{\frac{\alpha}{2}} f \|_{L^p(\mathbb{T}^d)} .\]
When $p=2$, we write $H^\alpha (\mathbb{T}^d) \coloneqq W^{\alpha,2}(\mathbb{T}^d)$.
\item
Let $\mathcal{C}^\alpha (\mathbb{T}^d),\ \alpha\in\mathbb{R}$ be the H\"older-Besov space, see Section~\ref{kpp} for the definition.
\item
We define the approximation operators $P_N,\ N\in\mathbb{N}_0$ by
\begin{equation}
P_N f \coloneqq \sum_{l\in\mathbb{Z}^d,\ |l|\le N} \hat f(l) e_l .
\end{equation}
\item
For $k\in\mathbb{N}_0$ and $c\in\mathbb{R}$, we define the $k$th Hermite polynomial $H_k (x;c)$ by
imposing that $H_k$ is of degree $k$ and that
\begin{equation}\label{herm}
e^{tx-\frac{1}{2}ct^2} = \sum_{k=0}^\infty \frac{t^k}{k!} H_k(x;c),\ \ \ t,x\in\mathbb{R}. 
\end{equation}
We extend this notation to the multicomponent case in the usual way  by considering multi-indices 
$\beta\in\mathbb{N}_0^k$ and setting
\begin{equation}\label{polyw}
e^{\langle \boldsymbol{t}, \boldsymbol{x} \rangle_{\mathbb{R}^k}-\frac{1}{2}\langle C \boldsymbol{t}, \boldsymbol{t}\rangle_{\mathbb{R}^k}} = \sum_{\beta\in\mathbb{N}_0^k} \frac{\boldsymbol{t}^\beta}{\beta!} H_\beta(\boldsymbol{x};C),\ \ \ \boldsymbol{t},\boldsymbol{x}\in\mathbb{R}^k, 
\end{equation}
where $C=(C_{ij})_{1\le i,j\le k} \in \mathbb{R}^k \otimes \mathbb{R}^k$ is a symmetric matrix. These polynomials are 
orthogonal for the Gaussian distribution with covariance $C$ and are closely related to the Wiener chaos decomposition, 
see Section~\ref{piyo}.
\end{itemize}

\section{Main result and its proof}\label{se2}
In this section, we state and prove our main result. We prove some lemmas which are used in the proof later in Section~\ref{sele}.
Letting $\sigma \ge 0$ and $F(u) =\langle \nabla \rangle^{n_0}\prod_{i=1}^k  \left(\langle \nabla \rangle^{n_i} u\right)$ with $n_i\in\mathbb{R}$, we consider the equation
\begin{equ}\label{koll} 
 \partial_t u + (1 - \Delta)^{\frac{\sigma}{2}} u + F(u) =  \langle \nabla \rangle^{m} \eta\;,\qquad \mbox{on}\ \mathbb{R}_+ \times \mathbb{T}^d\;,
\end{equ}
where $\eta$ is Gaussian space-time white noise defined on a probability space $(\Omega,\mathcal{F},\mathbb{P})$.  Then, the stationary solution $Z$ of the linearised equation
\begin{gather}\label{koji} 
 \partial_t Z + (1 - \Delta)^{\frac{\sigma}{2}} Z =  \langle \nabla \rangle^{m} \eta, 
\end{gather}
satisfies the following, see Section~\ref{sele} for the proof.
\begin{lemm}\label{nimai}
One has  $Z(t) \in\mathcal{C}^{\frac{\sigma - d}{2} - m -\kappa}(\mathbb{T}^d)$ $\mathbb{P}$-almost surely for any $t, \kappa >0$.
\end{lemm}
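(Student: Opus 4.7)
The plan is to analyse $Z(t)$ mode-by-mode in Fourier coordinates and then reassemble using Littlewood--Paley theory. In Fourier space the linear equation \eqref{koji} decouples completely: writing $\hat Z(t,l)$ and $\hat \eta(t,l)$ for the Fourier modes, each one satisfies an independent complex Ornstein--Uhlenbeck equation with drift rate $\langle l\rangle^{\sigma}$ and diffusion coefficient $\langle l\rangle^{m}$. Since $\langle l\rangle \ge 1$ for every $l\in\Z^{d}$ (thanks to the mass term in $\langle\nabla\rangle$), a stationary solution exists and each Fourier mode is a centered complex Gaussian with variance
$$\E\bigl[|\hat Z(t,l)|^{2}\bigr] \,=\, \frac{\langle l\rangle^{2m}}{2\langle l\rangle^{\sigma}} \,=\, \tfrac{1}{2}\langle l\rangle^{2m-\sigma},$$
independent of $t$; in particular, the law of $Z(t)$ on $\D'$ is Gaussian.

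Next I would Littlewood--Paley localise: fix a dyadic partition of unity $\{\rho_{j}\}_{j\ge -1}$ with $\rho_{j}$ essentially supported on $|l| \sim 2^{j}$, set $\Delta_{j} Z(t) \coloneqq \rho_{j}(\nabla) Z(t)$, and note that by translation invariance the pointwise second moment is independent of $x$ and bounded by
$$\E\bigl[|\Delta_{j} Z(t,x)|^{2}\bigr] \,=\, \sum_{l\in\Z^{d}} |\rho_{j}(l)|^{2}\,\tfrac{1}{2}\langle l\rangle^{2m-\sigma} \,\lesssim\, 2^{j(2m - \sigma + d)},$$
where the $2^{jd}$ factor comes from counting lattice points in the annulus $|l|\sim 2^{j}$ and the $2^{j(2m-\sigma)}$ factor from the size of each individual mode.

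Because $\Delta_{j} Z(t,x)$ lies in the first Wiener chaos of $\eta$, Gaussian hypercontractivity (equivalence of moments in a fixed chaos) upgrades the second-moment bound to
$$\E\bigl[|\Delta_{j} Z(t,x)|^{p}\bigr]^{1/p} \,\lesssim\, 2^{j(m + (d-\sigma)/2)} \qquad (p \ge 2),$$
with implicit constant depending only on $p$. Integrating over $x\in\mathbb{T}^{d}$ gives $\E\bigl[\|\Delta_{j} Z(t)\|_{L^{p}}^{p}\bigr] \lesssim 2^{jp(m+(d-\sigma)/2)}$. Now fix $\kappa > 0$ and choose $p\ge 2$ with $d/p < \kappa/2$; from
$$\E\bigl[\|Z(t)\|_{B^{\alpha'}_{p,p}}^{p}\bigr] \,=\, \sum_{j\ge -1} 2^{j\alpha' p}\,\E\bigl[\|\Delta_{j} Z(t)\|_{L^{p}}^{p}\bigr] \,\lesssim\, \sum_{j\ge -1} 2^{jp(\alpha' - (\sigma-d)/2 + m)}$$
and the choice $\alpha' = (\sigma-d)/2 - m - \kappa/2$ we conclude that the right-hand side is finite. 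The Besov embedding $B^{\alpha'}_{p,p}(\mathbb{T}^{d}) \hookrightarrow B^{\alpha' - d/p}_{\infty,\infty}(\mathbb{T}^{d}) = \mathcal{C}^{\alpha' - d/p}(\mathbb{T}^{d})$ then yields $Z(t) \in \mathcal{C}^{(\sigma-d)/2 - m - \kappa}(\mathbb{T}^{d})$ almost surely, as claimed. The only step that requires any care is the dyadic second-moment sum; everything else is standard Gaussian and Besov-space machinery, and the $\kappa > 0$ loss is precisely what absorbs the endpoint factors in the embedding.
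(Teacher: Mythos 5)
Your proof is correct and follows essentially the same route as the paper, which omits the proof of this lemma by pointing to the argument for Lemma~\ref{hima}: bound mixed moments of the regularised object using Gaussian moment equivalence (hypercontractivity), sum the mode variances $\langle l\rangle^{2m-\sigma}$, and embed into the H\"older--Besov scale with a $d/p$ loss absorbed by $\kappa$. The only cosmetic difference is that you work with Littlewood--Paley blocks and $B^{\alpha'}_{p,p}$ norms where the paper uses $W^{\alpha,2p}$ Sobolev norms, and your constant $\tfrac12\langle l\rangle^{2m-\sigma}$ versus the paper's $\langle l\rangle^{2m-\sigma}$ in Lemma~\ref{bu} is an irrelevant normalisation factor.
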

Throughout the paper, we make the following assumption to simplify matters, although we do not
expect this assumption to be strictly necessary for our results, see Remark~\ref{hirq} below.
\begin{assum}\label{w43}
One has $\frac{\sigma -d}{2}- m -n_i <0$ for any $1 \le i \le k$.
\end{assum}
Loosely speaking, this assumption guarantees that none of the factors appearing in $F$ are function-valued,
so that $F(Z)$, and thus $F(u)$ is classically ill-defined. Under a certain subcriticality
condition however, see \cite[Assumption~8.3]{Hai14}, it is known that we can give a meaning to them via 
renormalization. By the theory of regularity structures \cite{Hai14}, we can expect at least the existence of local (in time) solutions to the equation as long as it is subcritical. Now, letting $u$ be the solution of \eqref{koll} after suitable renormalization, we consider the shifted process $v(t) \coloneqq u(t) - Z(t) + Y(t)$, where the process $Y$ is defined as follows. 
Writing $Z_N = P_N Z$, we define
\[ \Wick{F(Z_N)} = \scal{\nabla}^{n_0} H_{(1,1,\cdots,1)} \left( N(\nabla) Z_N ; C_N    \right),\]
where $N(\nabla) \coloneqq (\langle \nabla \rangle^{n_1}, \cdots,\langle \nabla \rangle^{n_k})$, $C_{N;ij} \coloneqq \mathbb{E} \left[ \left(\langle \nabla \rangle^{n_i}Z_N (t)\right) \left(\langle \nabla \rangle^{n_j}Z_N(t) \right) \right]$ (which is independent of $t$), and the polynomial $H_{(1,1,\cdots,1)}$ is defined in Section~\ref{sec:notations}.
Then, we define
\[Y^N(t) \coloneqq \int^t_{-\infty}e^{(s-t)(1-\Delta)^{\frac{\sigma}{2}}}(\Wick{F(Z_N)})(s) ds ,\]
which is the stationary solution of 
\begin{equation}\label{hiku}
\mathcal{L}Y^N = \Wick{F(Z_N)}\;,\qquad \mathcal{L} \coloneqq \partial_t + (1 - \Delta)^{\frac{\sigma}{2}} \;,   
\end{equation}
where $(e^{-t(1-\Delta)^{\frac{\sigma}{2}}})_t$ is the semigroup generated by the fractional Laplacian $(1-\Delta)^{\frac{\sigma}{2}}$.
Let $A$ be the exponent given by
\begin{equ}[e:defA]
A \coloneqq \frac{1}{2} \sum_{i=1}^k(\sigma - d - 2m -2n_i)\;,
\end{equ}
which represents the regularity of the nonlinearity before the application of $\scal{\nabla}^{n_0}$. 
In particular one has $A < 0$ by Assumption~\ref{w43}.
We then make the following assumption.
\begin{assum}\label{ass:subcritical}
One has 
\begin{equ}[e:subcritical]
A + \frac{d+\sigma}{2} > 0\;,\qquad
A + \frac{d+\sigma}{2} > n_0 - m\;.
\end{equ}
\end{assum}
The second condition in \eqref{e:subcritical} is nothing but the assumption that the equation
under consideration is subcritcial in the sense of \cite{Hai14}. The first condition
rules out the kind of ``variance blow-up'' phenomenon observed for example in \cite{variance}.
We prove the following lemma in Section~\ref{sele}.

\begin{lemm}\label{hima}
Under Assumptions~\ref{w43}--\ref{ass:subcritical}, the sequence of random variables $Y^N(t)$ converges to some $Y(t)$ $\mathbb{P}$-almost surely in $\mathcal{C}^{A - n_0 + \sigma -\kappa}(\mathbb{T}^d)$ and in $L^p(\Omega; \mathcal{C}^{A - n_0 + \sigma -\kappa}(\mathbb{T}^d))$ as $N\rightarrow \infty$ for any $t, \kappa, p>0$.
\end{lemm}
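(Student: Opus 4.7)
The plan is to expand $Y^N(t)$ directly in Fourier and Wiener chaos, reduce the regularity statement to an elementary but delicate kernel bound, and then convert $L^2$ estimates into the stated $L^p$ and almost-sure convergence. First, using the stationary representation $\hat Z(l;s) = \scal{l}^m \int_{-\infty}^s e^{-(s-r)\scal{l}^\sigma}\,d\hat\eta(l;r)$ for the solution of \eqref{koji} together with the fact that $H_{(1,\ldots,1)}$ is the Wick product, I would write
\[
\hat Y^N(t)(l) = c_k\, \scal{l}^{n_0}\! \sum_{\substack{l_1+\cdots+l_k = l\\ |l_i|\le N}} \prod_{i=1}^k \scal{l_i}^{n_i} \int_{-\infty}^t\! e^{-(t-s)\scal{l}^\sigma} \Wick{\prod_{i=1}^k \hat Z(l_i;s)}\,ds
\]
for some combinatorial constant $c_k$, exhibiting $Y^N(t)$ as an element of the $k$-th (inhomogeneous) Wiener chaos of $\eta$.

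Next I would compute $\E[|\hat Y^N(t)(l)|^2]$ using the Wick isometry. Since distinct Fourier modes of $Z$ are independent with covariance $\tfrac12\scal{l}^{2m-\sigma} e^{-|s-s'|\scal{l}^\sigma}$, a symmetric Fubini evaluation of the double time integral produces the factor $(\scal{l}^\sigma(\scal{l}^\sigma + \Sigma))^{-1}$ with $\Sigma := \sum_i \scal{l_i}^\sigma$, and summing over the $k!$ pairings yields
\[
\E[|\hat Y^N(t)(l)|^2] \lesssim \scal{l}^{2n_0-\sigma} \sum_{\substack{l_1+\cdots+l_k = l\\ |l_i|\le N}} \frac{\prod_{i=1}^k \scal{l_i}^{2n_i+2m-\sigma}}{\scal{l}^\sigma + \Sigma}.
\]

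The heart of the matter is to bound the sum on the right uniformly in $N$ by $\scal{l}^{-d - 2A - \sigma}$. Decomposing by dyadic scales $M_i \simeq \scal{l_i}$ and ordering $M_1 \ge \cdots \ge M_k$, the constraint $\sum l_i = l$ forces $M_1 \gtrsim |l|$ and leaves $\lesssim \prod_{i\ge 2} M_i^d$ free tuples at fixed scales. Assumption~\ref{w43} is equivalent to each exponent $d+2n_i+2m-\sigma$ being strictly positive, so the sums over the smaller $M_i$ concentrate at the common top scale $M_1 = M$, and after division by $\Sigma \simeq M^\sigma$ the summand at scale $M$ behaves like $M^{-d-2A-\sigma}$. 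The first inequality in \eqref{e:subcritical} is precisely the positivity $d+2A+\sigma > 0$, which makes the remaining dyadic sum over $M \gtrsim |l|$ convergent and dominated by its endpoint $M \simeq |l|$. Substituting back and summing over $|l|\simeq 2^j$ then gives $\sum_{|l|\simeq 2^j}\E[|\hat Y^N(t)(l)|^2] \lesssim 2^{-2j(A-n_0+\sigma)}$.

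To conclude, since $Y^N(t)$ lies in a finite Wiener chaos, Nelson's hypercontractivity upgrades this to the same-order $L^p$ bound for every $p<\infty$, and a standard Littlewood--Paley / Kolmogorov-type argument (with $\kappa$ absorbing summability in $j$) then gives $\sup_N \E[\|Y^N(t)\|_{\mathcal{C}^{A-n_0+\sigma-\kappa}}^p] < \infty$. Running the same analysis on $Y^{N'}(t) - Y^N(t)$ restricts the inner sum to tuples containing at least one $|l_i| > N$, and the positivity of $d+2A+\sigma$ then furnishes a quantitative rate $\lesssim N^{-\delta}$ for some $\delta > 0$, establishing the Cauchy property in $L^p(\Omega;\mathcal{C}^{A-n_0+\sigma-\kappa})$ and hence $L^p$ convergence; almost-sure convergence follows by Borel--Cantelli along a subsequence. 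I expect the main obstacle to be the kernel estimate, as it requires carefully balancing scale combinatorics against exponents whose signs depend on the parameters, with both halves of Assumption~\ref{ass:subcritical} together with Assumption~\ref{w43} all entering simultaneously, which is presumably why these calculations are deferred to Section~\ref{sele}.
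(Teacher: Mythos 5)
Your proposal is correct and takes essentially the same route as the paper: an explicit Wick/second-moment computation of $Y^N$ in Fourier, a power-counting bound on the resulting convolution sum using Assumption~\ref{w43} together with the first inequality in \eqref{e:subcritical}, then hypercontractivity, an embedding into H\"older--Besov spaces, and a Borel--Cantelli argument for the convergence. The only cosmetic differences are that the paper bounds the pointwise variance $\E[|\scal{\nabla}^\alpha Y^N(t,x)|^2]$ (keeping all pairings $\tau\in\mathfrak{S}_k$, whose mixed exponents $n_i+n_{\tau(i)}+2m-\sigma$ are controlled by the same assumption, e.g.\ via Cauchy--Schwarz they reduce to your diagonal term) and uses $W^{\alpha,2p}(\mathbb{T}^d)\subset \mathcal{C}^{\alpha-\frac{d}{2p}}(\mathbb{T}^d)$ rather than Littlewood--Paley blocks, and that the almost-sure convergence should be for the full sequence, which your quantitative rate $N^{-\delta}$ combined with arbitrarily large $p$ indeed yields.
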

Under the subcriticality assumption, we can expect the spatial regularity of $v$ to be strictly better than 
that of $Y$, see \cite{Hai14,BCCH21}. This kind of expansion is common when dealing with singular SPDEs and is often referred to 
as the Da Prato--Debussche trick, after \cite{DPD02,DPD03}.
This observation justifies the following assumption which we take for granted in lieu of a solution theory 
for \eqref{koll}.

\begin{assum}\label{katei}
There exists a (nonrandom) time $t>0$ such that, setting $v \coloneqq u - Z + Y$, one has $v(t)\in \mathcal{C}^{\beta}(\mathbb{T}^d)$ almost surely for some $\beta > A -n_0 + \sigma$.
\end{assum}

\begin{rem}\label{zaq}
Although in the above argument we have not discussed the existence of solutions satisfying Assumption~\ref{katei}, it turns
out that in many cases of interest, equations of the type \eqref{koll} with suitable renormalization 
are known to be globally well-posed and satisfy Assumption~\ref{katei}. 
For example, it is proved that the dynamical $\Phi^4_3$-model is one of them.
\end{rem}

\begin{rem}
The effect of the initial condition $u(0)$ on the solution $u(t)$ appears as the term $e^{-t(1-\Delta)^{\frac{\sigma}{2}}} u(0)$ in view of the mild form of the solution, and it belongs to $\mathcal{C}^{\beta}(\mathbb{T}^d)$ for any $\beta >0$.
Hence, we regard it included in the term $v(t)$.
\end{rem}

\begin{rem}\label{hirq}
In this paper, we exclude the case where Assumption~\ref{w43} fails, in which case $v$ typically fails to be more
regular than $Y$. Although our 
proof is not directly applicable then, we expect that\dash at least in some situations\dash such equations can also 
be treated with similar techniques.
\end{rem}

Now we state the first main theorem.
In the theorem we only assume that $u(t)$ satisfies Assumption~\ref{katei} with $t>0$, and do not assume that $u$ is a solution of \eqref{koll}, because in the proof we only use the structure of $u(t)$ appeared in Assumption~\ref{katei}.
Here we remark the information of \eqref{koll} is included in $Z(t)$ and $Y(t)$, which appear in Assumption~\ref{katei}.

\begin{thm}\label{new}
Let $u$ be any process satisfying Assumption~\ref{katei} at some time $t>0$, let Assumptions~\ref{w43}--\ref{ass:subcritical} hold, and let
\begin{equ}[e:singular]
A + \frac{\sigma}{2} \le n_0-m\;.
\end{equ}
Then, the law of $u(t)$ is singular with respect to the law of $Z(t)$.
\end{thm}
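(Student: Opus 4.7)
My plan is to establish singularity by exhibiting a Borel-measurable functional $\Phi \colon \D' \to \overline{\R}$ which attains one almost-sure constant value under the law of $Z(t)$ and a different one under the law of $u(t)$; the level set on which they disagree then witnesses the mutual singularity. I construct $\Phi$ as the almost-sure limit along a dyadic subsequence $N_j = 2^j$ of normalized statistics $\Phi_N(f) := \Psi_N(f)/c^{\alpha_0}_{N,2}$, where $\alpha_0$ is the critical exponent from~\eqref{e:defalpha0} and $c^{\alpha_0}_{N,2}$ is the logarithmically divergent normalization from~\eqref{e:cN2}. Since $Y(t)$ lies in the $k$-th Wiener chaos rather than the first, a bare $H^{\alpha_0}$-norm functional does \emph{not} distinguish $u$ from $Z$ (the cross pairing $\E\langle P_N Z(t), P_N Y(t)\rangle_{H^{\alpha_0}}$ vanishes identically), so $\Psi_N$ has to resonate with the polynomial structure of $\Wick{F(Z_N)}$. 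A natural choice is
\[
    \Psi_N(f) \;\propto\; \langle P_N f,\,\mathcal{L}^{-1}P_N \widetilde F_N(f)\rangle_{L^2}\,,
\]
where $\widetilde F_N$ denotes the polynomial in $f$ whose evaluation at $Z_N$ yields $\Wick{F(Z_N)}$, possibly modified by an $\alpha_0$-dependent Fourier weight.

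Given such a $\Psi_N$, two asymptotic statements are needed. For the law of $Z(t)$: the random variable $\Psi_N(Z(t))$ decomposes into chaoses of degree at most $k+1$ with vanishing mean, since $P_N Z$ and $Y^N = \mathcal{L}^{-1}P_N\Wick{F(Z_N)}$ lie in different chaoses; a direct Wiener-chaos computation of the type carried out in Section~\ref{sele} bounds its variance by $o\big((c^{\alpha_0}_{N,2})^2\big)$, so that Chebyshev plus Borel--Cantelli along $N_j$ give $\Phi_{N_j}(Z(t)) \to 0$ almost surely. For the law of $u(t) = Z(t) - Y(t) + v(t)$: substituting this decomposition into $\widetilde F_N(P_N u)$ and expanding via the multilinear Hermite identity yields $\Psi_N(u) = \Psi_N(Z) + R_N$. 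The $v$-cross-terms in $R_N$ are $o(c^{\alpha_0}_{N,2})$, thanks to Assumption~\ref{katei}, Cauchy--Schwarz, and the $L^p$-bound on $Y(t)$ from Lemma~\ref{hima}. The $-Y(t)$-contribution in $R_N$ has a non-vanishing expectation, computable via the Gaussian-chaos contraction formula and expressible as an explicit deterministic Fourier sum; under condition~\eqref{e:singular}, namely $A + \sigma/2 \le n_0 - m$, this sum diverges at exactly the logarithmic rate of $c^{\alpha_0}_{N,2}$, while the fluctuation of $R_N$ around its mean is controlled by the same variance estimate as for $Z(t)$. Thus $\Phi_{N_j}(u(t)) \to c$ almost surely for some $c \neq 0$, and $\Phi := \limsup_j \Phi_{N_j}$ separates the two laws.

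The main obstacle is the joint fine-tuning of $\Psi_N$ and $\alpha_0$: the statistic must simultaneously (a) have variance $o\big((c^{\alpha_0}_{N,2})^2\big)$ under the Gaussian $Z$-law, and (b) produce a deterministic shift of order exactly $c^{\alpha_0}_{N,2}$ from the $Y$-contribution under~\eqref{e:singular}. Requirement~(a) is a delicate chaos-by-chaos bound on all Gaussian contractions arising in $\langle P_N Z,\,\mathcal{L}^{-1}P_N \Wick{F(Z_N)}\rangle_{L^2}$ and relies on the first inequality of Assumption~\ref{ass:subcritical}, which rules out variance blow-up. Requirement~(b) reduces to combinatorial Fourier-integral bookkeeping whose critical divergence rate is governed precisely by~\eqref{e:singular}; intuitively, $A + \sigma/2 \le n_0 - m$ is exactly the exponent inequality that forces the contracted momentum sum to match (rather than undershoot) the rate of $c^{\alpha_0}_{N,2}$. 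The interplay of these two constraints pins down the admissible $\alpha_0$, and the quantitative Fourier estimates are deferred to the lemmas of Section~\ref{sele}.
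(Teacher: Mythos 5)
Your overall architecture does match the paper's mechanism: a degree-$(k{+}1)$ renormalized polynomial statistic of $P_N\phi$, a divergent normalizing constant of the type \eqref{e:cN2}, almost-sure limits that differ under the two laws, the $v$-contributions killed via Assumption~\ref{katei}, and chaos estimates for the fluctuations. However, the specific statistic you propose has gaps that are not cosmetic. First, $\Psi_N(f)=\langle P_N f,\mathcal{L}^{-1}P_N\widetilde F_N(f)\rangle$ is not a functional of the time-$t$ marginal: $\mathcal{L}^{-1}=(\partial_t+(1-\Delta)^{\sigma/2})^{-1}$ acts on space-time functions, and $Y(t)$ depends on the whole past trajectory of $Z$, so no spatial Fourier weight applied to $\widetilde F_N(P_N u(t))$ reproduces it; indeed the denominator $\sum_i\scal{l_i}^\sigma+\scal{l_1+\cdots+l_k}^\sigma$ in \eqref{teisu} comes precisely from the time integral defining $Y$. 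Once $\mathcal{L}^{-1}$ is replaced by an admissible spatial multiplier, all kernels change and the estimates of Section~\ref{sele} you invoke no longer apply as stated. Second, and more seriously, expanding your bilinear statistic at $u=Z-Y+v$ produces not only the cross term $\langle -Y_N,\,M\,\Wick{F(Z_N)}\rangle$ whose mean you track, but also terms such as $\langle Z_N,\,M[\Wick{(\cdot)^{k-1}}\,Y_N]\rangle$ (a degree-$(k{-}1)$ Wick polynomial of $Z_N$ times $Y_N$, paired against $Z_N$), which carry divergent means of comparable order; you must prove these do not cancel the main divergence. This is exactly the cancellation that defeats the argument for the KPZ equation (Remark~\ref{rem:KPZcase}), so it cannot be absorbed into ``fine-tuning''. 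The paper's statistic $\langle H_{k+1}(\scal{\nabla}^\alpha P_N\phi;c^\alpha_{N,1}),1\rangle$ is engineered so that the expansion \eqref{cx} yields a single divergent mean, $-(k{+}1)c^\alpha_{N,2}$, compensated explicitly in the definition of the separating event, with every other term handled by Lemmas~\ref{hima}, \ref{anb}, \ref{vv}/\ref{pro} and the fluctuation bounds \eqref{e:boundZY}, \eqref{e:boundZYborderline}.

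Two further points. You run everything at the critical exponent $\alpha_0$ of \eqref{e:defalpha0} with the logarithmically divergent normalization, but the paper does so only in the borderline case $A+\frac\sigma2=n_0-m$, where the second-moment bound \eqref{e:boundZYborderline} is available; under strict inequality in \eqref{e:singular} it works at some $\alpha>\alpha_\star>\alpha_0$ with polynomial rates, using \eqref{e:boundZY} and a normalization $N^{-\gamma}$ with $\gamma\in(\delta(\alpha)-\frac d2,\delta(\alpha))$. Your plan therefore needs a fluctuation estimate at $\alpha_0$ in the strict regime that you neither state nor prove and that is not among the cited lemmas. Moreover, normalizing by $c^{\alpha_0}_{N,2}$ itself forces you to show that the ratio of your divergent Fourier sum to $c^{\alpha_0}_{N,2}$ converges to a nonzero constant (or at least has a nondegenerate limsup); the paper sidesteps this sharper asymptotic question by normalizing strictly below the divergence rate, so that the statistic tends to $0$ under one law and diverges under the other. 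In short, the strategy is the right one, but the steps on which the theorem actually turns\dash well-definedness of the statistic on the marginal, non-cancellation of the divergent mean for that statistic, and the variance bounds\dash are precisely the ones left unverified.
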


\begin{rem}\label{rem:CM}
Set $r_1 \coloneqq \frac{\sigma - d}{2} -m$ and $r_2 \coloneqq A -n_0 + \sigma$. In view of Lemma~\ref{hima}, these can be regarded as the spatial regularities of $Z$ and $Y$, respectively. Then, the condition $A + \frac{\sigma}{2} \le -m + n_0$ is equivalent to $r_2 - r_1 \le \frac{d}{2}$. The case $r_2-r_1 = \frac{d}{2}$ is the border of the condition that $Y(t)$ belong to the Cameron--Martin space of the law of $Z(t)$. 
Note that the critical case $A + \frac{\sigma}{2} = -m + n_0$ is included in Theorem~\ref{new}.
\end{rem}


Before we proceed to the proof, we introduce some notations and lemmas. First, we define for any $\alpha \in \R$ 
the sequences of constants 
\begin{equation}
c_{N,1}^\alpha \coloneqq \E \left[ \left| \langle \nabla \rangle ^\alpha Z_N (t,x)   \right|^2   \right]
\end{equation}
(which is indeed independent of $x$ and $t$ by space-time stationarity of $Z$) 
and
\begin{equ}[e:cN2]
c_{N,2}^\alpha  \coloneqq \E \left[ H_{k}\left(  \langle \nabla \rangle ^\alpha Z_N (t,x);c_{N,1}^\alpha  \right) \langle \nabla \rangle ^\alpha Y_N (t,x)  \right]\;,
\end{equ}
where $Y_N \coloneqq P_N Y$ and $H_k$ is the $k$th Hermite polynomial, see Section~\ref{sec:notations} for the definition.
Recall that $k$ is the order of nonlinearity, as fixed at the start of Section~\ref{se2}. Then, we consider the events
\begin{equs}
A^{\alpha,\gamma} &\coloneqq \Bigl\{ \phi\in \D' \,:\, 
 \lim_{N \to \infty} N^{-\gamma} \langle H_{k+1}\left( \langle \nabla \rangle ^\alpha    P_N\phi; c_{N,1}^\alpha   \right) + (k+1)c_{N,2}^\alpha, 1   \rangle_{L^2(\mathbb{T}^d)} = 0  \Bigr\} \;,\\
B^{\alpha,\gamma} &\coloneqq \Bigl\{ \phi\in  \D' \,:\,
\lim_{N \to \infty} (\log N)^{-\gamma} \langle H_{k+1}\left( \langle \nabla \rangle ^\alpha    P_N\phi; c_{N,1}^\alpha   \right) + (k+1)c_{N,2}^\alpha, 1   \rangle_{L^2(\mathbb{T}^d)} = 0  \Bigr\}\;,
\end{equs}
for $\alpha, \gamma \in \mathbb{R}$. Finally, we prepare the following lemmas, see Section~\ref{sele} for the proofs.

\begin{lemm}\label{anb}
\begin{enumerate}[label=(\roman*)]
\item\label{(i)}
When $\gamma >0$ and $J\in \mathbb{N}$ satisfy $J(\frac{\sigma - d}{2}- m - \alpha) + \gamma >  - \frac{d}{2}$, the sequence $\left\{ N^{-\gamma}\Wick{(\langle \nabla \rangle^{\alpha} Z_N)^J}(t) \right\}_N$ converges to $0$ almost surely in $\mathcal{C}^{-\frac{d}{2}}(\mathbb{T}^d)$ and in $L^p(\Omega;\mathcal{C}^{-\frac{d}{2}}(\mathbb{T}^d) )$ as $N\rightarrow \infty$ for any $t, p >0$.
\item\label{(ii)}
Assume that $J\in \mathbb{N}$ is such that $J(\frac{\sigma - d}{2}- m - \alpha) =  - \frac{d}{2}$.
Then, for $\gamma > \frac{1}{2}$, the sequence 
$\left\{(\log N)^{-\gamma}\Wick{(\langle \nabla \rangle^{\alpha} Z_N)^J}(t)\right\}_N$ converges to $0$ almost surely in $\mathcal{C}^{-\frac{d}{2}}(\mathbb{T}^d)$ and in $L^p(\Omega;\mathcal{C}^{-\frac{d}{2}}(\mathbb{T}^d) )$ as $N\rightarrow \infty$ for any $t, p>0$.
\end{enumerate}
\end{lemm}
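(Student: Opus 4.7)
The plan is to bound the $L^p(\Omega;\mathcal{C}^{-d/2}(\mathbb{T}^d))$-norm of $\Wick{(\langle\nabla\rangle^\alpha Z_N)^J}(t)$ with a rate that is killed by $N^{-\gamma}$ in part (i) and by $(\log N)^{-\gamma}$ in part (ii); the almost-sure statements then follow by Markov plus Borel--Cantelli along the dyadic subsequence $N_k = 2^k$, with passage to the full sequence by monotonicity of the bounds in $N$.

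Set $f_N := \langle\nabla\rangle^\alpha Z_N$ and $\beta := 2\alpha+2m-\sigma$ (so $\beta > -d$ by Assumption~\ref{w43}), and write $r := \frac{\sigma-d}{2}-m-\alpha = -(d+\beta)/2$. Since $\Delta_q \Wick{f_N^J}(t,x)$ lies in the $J$-th Wiener chaos over $\eta$, the Wiener isometry yields
\[
\mathbb{E}\bigl[|\Delta_q \Wick{f_N^J}(t,x)|^2\bigr] = J!\sum_{l\in\mathbb{Z}^d}\chi_q(l)^2 S_N^{\ast J}(l),
\]
where $S_N^{\ast J}$ is the $J$-fold discrete convolution of the spectral density $S_N(l) \simeq \langle l\rangle^\beta \one_{|l|\le N}$ of $f_N$. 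A case analysis of this iterated convolution, splitting according to the sign of $2Jr+d$, gives (uniformly for $q\le\log_2 N+O(1)$) pointwise variance bounds of the form $2^{qd}W_N(q)$ where $W_N(q)\lesssim N^{-(2Jr+d)\vee 0}$ off criticality and $W_N(q)\lesssim\log N$ at criticality $2Jr+d=0$.

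Passing from this variance bound to the $\mathcal{C}^{-d/2}$-norm relies on three ingredients: Nelson's hypercontractivity on the $J$-th chaos, reducing higher $L^{p}(\Omega)$-moments to the $L^2(\Omega)$-moment up to a constant depending on $p$ and $J$; a Dudley-type chaining estimate for the supremum of a band-limited $J$-chaos random field over $\mathbb{T}^d$, which avoids the spurious $N^\varepsilon$-losses that a naive Bernstein inequality would produce; and the Besov characterization $\|\cdot\|_{\mathcal{C}^{-d/2}}=\sup_{q\ge -1}2^{-qd/2}\|\Delta_q\cdot\|_{L^\infty}$. Combining these yields
\[
\mathbb{E}\bigl[\|\Wick{f_N^J}(t)\|_{\mathcal{C}^{-d/2}}^{p}\bigr]^{1/p} \lesssim_{p,J} \begin{cases} N^{-(Jr+d/2)} & \text{if } 2Jr+d<0,\\ \sqrt{\log N} & \text{if } 2Jr+d=0,\\ 1 & \text{if } 2Jr+d>0.\end{cases}
\]
Multiplying by $N^{-\gamma}$ absorbs this in case (i) (since the hypothesis $Jr+\gamma>-d/2$ is exactly $\gamma>-Jr-d/2$), and multiplying by $(\log N)^{-\gamma}$ with $\gamma>1/2$ absorbs it in case (ii). The main technical obstacle is controlling $S_N^{\ast J}$ sharply in the critical regime $2Jr+d=0$: a naive bound produces a $(\log N)^{J-1}$ divergence, and the improvement to a single $\log N$ (required for $\gamma>1/2$ to suffice) uses that only one step of the $J$-fold convolution lies exactly on the integrability threshold, the remaining steps being absolutely convergent.
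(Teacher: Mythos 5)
Your overall strategy\dash hypercontractivity to reduce to second moments, an explicit bound on the $J$-fold spectral convolution $S_N^{\ast J}$ with the trichotomy according to the sign of $2Jr+d$, and Markov plus Borel--Cantelli for the almost sure statements\dash is the same as the paper's, and it does give part (i): there the hypothesis is a strict inequality, so whatever logarithmic (or small polynomial) losses occur in passing from pointwise variances to the $\mathcal{C}^{-\frac d2}$-norm are harmless. The genuine gap is in part (ii), and it sits exactly where you place the ``main technical obstacle'', but the obstacle is not the one you identify. The single-$\log N$ bound for the convolution is indeed available (it is the paper's computation); what your route cannot deliver is the claimed estimate $\mathbb{E}\bigl[\|\Wick{f_N^J}(t)\|_{\mathcal{C}^{-d/2}}^{p}\bigr]^{1/p}\lesssim\sqrt{\log N}$ at criticality. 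The $\mathcal{C}^{-d/2}$-norm takes a supremum in $x$ of each Littlewood--Paley block, and your own chaining step must pay the entropy of the block at frequency $2^q$, i.e.\ a factor of order $q^{J/2}$ for a chaos of order $J$; at the top frequencies $2^q\sim N$, where the per-point block variance is of order $2^{qd}$ (your factor $\log(N2^{-q})$ is $O(1)$ there), this yields $2^{-qd/2}\|\Delta_q\Wick{f_N^J}\|_{L^\infty}\sim(\log N)^{J/2}$, not $\sqrt{\log N}$. Nor is this an artefact of chaining: evaluating $\Wick{f_N^J}=H_J(f_N;c_{N,1}^\alpha)$ at the point where the Gaussian field $f_N=\scal{\nabla}^{\alpha}Z_N$ attains its maximum (which is of size $\sqrt{c_{N,1}^\alpha\log N}$) shows that the supremum-based critical norm genuinely carries a $(\log N)^{J/2}$. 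With only $\gamma>\frac12$ your argument would therefore not close for $J\ge 2$, which is precisely the case needed in the borderline part of Theorem~\ref{new} (there $J=k+1\ge2$).

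The paper avoids this by never taking a supremum in $x$ at fixed frequency: it bounds $\mathbb{E}\|\Wick{(\scal{\nabla}^\alpha Z_N)^J}(t)\|^{2p}_{W^{\beta,2p}}$, where hypercontractivity reduces the $x$-integral to the $p$-th power of the pointwise variance with no entropy factor, and then embeds $W^{\beta,2p}\subset\mathcal{C}^{\beta-\frac d{2p}}$ for large $p$; the price is an arbitrarily small loss of regularity rather than a power of $\log N$, which is affordable since only convergence in $\D'$ (in fact only the pairing with $\mathbf 1$) is used downstream. To repair your argument, bound the blocks in $L^{2p}_x$ rather than $L^\infty_x$ (or work directly with the $W^{\beta,2p}$-norms as the paper does) and accept the small loss. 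Separately, your passage from the dyadic subsequence to the full sequence ``by monotonicity of the bounds in $N$'' is not an argument: the random variables are not monotone in $N$, and monotone bounds say nothing about the values between dyadic points. In case (i) Borel--Cantelli applies directly to the full sequence once $p$ is large; in case (ii) one needs increment or maximal estimates (for instance using that $N\mapsto\Wick{(\scal{\nabla}^\alpha Z_N)^J}(t)$ is a martingale in $N$, or the standard argument in the references cited in the proof of Lemma~\ref{hima}).
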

\begin{rem}
From Lemmas~\ref{nimai} and~\ref{vv}, one would expect that $\{ N^{-\gamma}(\langle \nabla \rangle^\alpha Z_N (t))^J \}_N$ converges to $0$ only when $J\left( \frac{\sigma - d}{2} - m- \alpha \right) + \gamma >0$. Lemma~\ref{anb} shows that, thanks to renormalisation, $\{ N^{-\gamma}\Wick{\langle \nabla \rangle^\alpha Z_N )^J}(t) \}_N$ converges to $0$ under much weaker conditions on $\gamma$.
\end{rem}

A special role is being played by the value
\begin{equ}[e:defalpha0]
\alpha_0 = \frac{\sigma-d}{2} - m +\frac{1}{k+1}\left(A + m - n_0 + \frac{\sigma+d}{2}   \right)\;,
\end{equ}
which is such that $\delta(\alpha_0) = 0$, where
\begin{equs}
\delta(\alpha) &\coloneqq k\left(\alpha + m - \frac{\sigma - d}{2} \right) - \left( A -n_0 + \sigma - \alpha   \right) \\
&= n_0 + \alpha - \sigma + \sum_{i=1}^k \bigl(2m+n_i + \alpha + d-\sigma\bigr) \;.
\end{equs}
From Lemmas~\ref{nimai} and~\ref{hima}, $-\delta(\alpha)$ can be formally regarded as the sum of the spatial regularities of $\Wick{(\langle \nabla \rangle^\alpha Z (t))^k}$ and $\langle \nabla \rangle^\alpha Y(t)$, which means that $\alpha_0$ is the borderline value of $\alpha$ for the product $\Wick{(\langle \nabla \rangle^\alpha Z (t))^k} \langle \nabla \rangle^\alpha Y(t) $ to be well-defined. This suggests that the expectation $c_{N,2}^\alpha$ of this product may diverge whenever
$\alpha \ge \alpha_0$. This is indeed the case for our choice of nonlinearity, as shown in the following lemma.

\begin{lemm}\label{lpj}
Under Assumption~\ref{w43}--\ref{ass:subcritical}, one has 
\begin{equ}[e:boundc2]
 c_{N,2}^\alpha \gtrsim  
\begin{cases}\notag
N^{\delta(\alpha)} \quad  \mbox{when}\ \delta(\alpha) >0, \\
 \log N \quad  \mbox{when}\  \delta(\alpha) =0.
\end{cases}
\end{equ}
Furthermore, there exists $\alpha_\star > \alpha_0$ such that, 
for any $\alpha > \alpha_\star$, one has
\begin{equ}[e:boundZY]
\sup_N N^{\frac d2 - \delta(\alpha)} \E \bigl|\bigl\langle\Wick{\left(  \langle \nabla \rangle^\alpha Z_N  \right)^{k}}(t)\langle \nabla \rangle ^\alpha Y_N (t) - c_{N,2}^\alpha(t), 1 \bigr\rangle_{L^2(\mathbb{T}^d)}\bigr| < \infty\;.
\end{equ}
When $A + \frac{\sigma}{2} = n_0-m$ and $\alpha = \alpha_0$, then one has
\begin{equ}[e:boundZYborderline]
\sup_N \E \bigl|\bigl\langle\Wick{\left(  \langle \nabla \rangle^\alpha Z_N  \right)^{k}}(t)\langle \nabla \rangle ^\alpha Y_N (t) - c_{N,2}^\alpha(t), 1 \bigr\rangle_{L^2(\mathbb{T}^d)}\bigr| < \infty\;.
\end{equ}
\end{lemm}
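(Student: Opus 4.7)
The plan is to express $c_{N,2}^\alpha$ explicitly as a positive lattice sum via Fourier--chaos analysis, then carry out power counting. Since $Y = \lim_M Y^M$ lies in the $k$th homogeneous Wiener chaos and $\mathcal{L}^{-1}$ is diagonal in Fourier, its Fourier coefficient reads
\begin{equation*}
\hat Y(t,l_0) = \scal{l_0}^{n_0}(2\pi)^{-d(k-1)/2}\int_{-\infty}^t e^{-(t-s)\scal{l_0}^\sigma}\!\sum_{l_1+\cdots+l_k=l_0}\prod_{i=1}^k\scal{l_i}^{n_i}\Wick{\prod_{i=1}^k\hat Z(s,l_i)}\,ds.
\end{equation*}
Pairing with $\Wick{(\langle\nabla\rangle^\alpha Z_N)^k}(t,x)$ by the Wiener--It\^o isometry, using the stationary covariance $\E[\hat Z(t,l)\hat Z(s,l')] = \delta_{l+l'=0}\frac{1}{2}\scal{l}^{2m-\sigma}e^{-|t-s|\scal{l}^\sigma}$, summing over the $k!$ identical contraction patterns, and performing the $s$-integral, I will obtain
\begin{equation*}
c_{N,2}^\alpha = c\!\!\sum_{\substack{|l_0|\le N,\ |l_i|\le N\\ \sum_{i=1}^k l_i = l_0}} \frac{\scal{l_0}^{\alpha+n_0}\prod_{i=1}^k \scal{l_i}^{\alpha + n_i + 2m - \sigma}}{\scal{l_0}^\sigma + \sum_{i=1}^k\scal{l_i}^\sigma},
\end{equation*}
where $c = k!\,2^{-k}(2\pi)^{-dk}>0$ and every summand is positive.

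The lower bound \eqref{e:boundc2} then follows by restricting the sum to the ``diagonal'' regime $|l_i|\sim N$ for all $i=0,1,\ldots,k$. There each summand is of order $N^{(k+1)\alpha + n_0 + 2km + \sum n_i - (k+1)\sigma}$ and the number of configurations is $\simeq N^{dk}$; collecting exponents and comparing with the definition of $\delta(\alpha)$ yields exactly $c_{N,2}^\alpha\gtrsim N^{\delta(\alpha)}$. When $\delta(\alpha)=0$ the same diagonal sum is scale invariant and a dyadic decomposition gives the $\log N$ divergence. Assumption~\ref{w43} together with Assumption~\ref{ass:subcritical} ensures that the sub-diagonal regions do not dominate.

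For \eqref{e:boundZY} and \eqref{e:boundZYborderline}, denote by $R_N$ the inner product in question. Since $\E R_N = 0$, I will bound $\E|R_N|\le(\E R_N^2)^{1/2}$ and expand the variance by Wick's theorem as a finite sum of Feynman diagrams indexed by partial pairings between the two copies of $R_N$; the fully contracted diagram is exactly the subtracted $(2\pi)^{2d}(c_{N,2}^\alpha)^2$. Each remaining diagram is again a constrained lattice sum analogous to that for $c_{N,2}^\alpha$, but now at least one momentum pairing links the two copies of $R_N$. This ``connectedness'' constraint removes $d$ degrees of freedom from the multiple sum, and the net effect is an extra factor $N^{-d}$ compared to $(c_{N,2}^\alpha)^2$. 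For $\alpha>\alpha_\star$, a power-counting over all partial pairings will show that the worst diagram is of order $N^{2\delta(\alpha)-d}$ and all others are strictly smaller; the threshold $\alpha_\star>\alpha_0$ is chosen precisely so that sub-diagonal momentum regions carry a definite power gain.

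The main obstacle is the borderline bound \eqref{e:boundZYborderline}. At $\alpha=\alpha_0$ one has $\delta(\alpha_0)=0$, and the hypothesis $A+\sigma/2 = n_0-m$ places the relevant diagrams exactly at the logarithmic threshold, so the naive $N^{-d}$ gain from connectedness must precisely absorb logarithmic factors coming from the diagrammatic integrals. Handling this requires a case-by-case inspection of every partial pairing to check that no diagram blows up\dash even logarithmically\dash after centering. The detailed lattice-sum estimates that underpin all of the above power-countings are carried out in Section~\ref{sele}.
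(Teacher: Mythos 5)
Your treatment of the lower bound \eqref{e:boundc2} is essentially the paper's own argument and is fine: the explicit positive lattice sum for $c_{N,2}^\alpha$ is the correct one, and since all summands are positive you may simply restrict to dyadic diagonal blocks $|l_i|\sim 2^M\le N$, giving $c_{N,2}^\alpha\gtrsim\sum_{2^M\le N}2^{\delta(\alpha)M}$ and hence both cases (no argument about sub-diagonal regions is even needed for a lower bound).

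The gap is in the second part. Your second-moment/diagram expansion is the same strategy as the paper's (decompose the centred pairing into chaos components and bound each in $L^2$), but the key quantitative claim\dash that ``connectedness removes $d$ degrees of freedom and hence yields a net factor $N^{-d}$ relative to $(c_{N,2}^\alpha)^2$''\dash is only a heuristic valid on the fully diagonal regime $|l_i|\sim N$. What must actually be shown is that every connected diagram is summable over all sub-diagonal momentum regions with the stated uniform bound, and this does \emph{not} follow from counting the spatial exponents alone: the $Z$--$Z$ pairings linking the two copies of $Y_N$ contribute factors $\scal{l_i}^{2n_i+2m-\sigma}$, which under Assumption~\ref{w43} are only guaranteed to be better than $\scal{l_i}^{-d}$ individually but not jointly summable, while the outer factors $\scal{l_1+\cdots+l_k}^{\alpha+n_0}$ grow for large $\alpha$. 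The missing decay comes from the double time integral against the two heat kernels: the paper uses $\int\!\!\int e^{-a(t-s)-b|u-s|-c(t-u)}\,ds\,du\lesssim a^{-\frac{2-\gamma}{2}}c^{-\frac{2-\gamma}{2}}b^{-\gamma}$ and a splitting $\sigma=\sum_{i\ge0}\sigma_i$ chosen as in \eqref{e:boundbetai} (whose existence relies on the first inequality of \eqref{e:subcritical}) to redistribute parabolic smoothing onto the individual momenta, after which the constrained sums are controlled by Lemmas~\ref{lem:loop} and~\ref{lem:boundDivergent}; the existence of $\alpha_\star$ and, above all, the borderline bound \eqref{e:boundZYborderline} at $\alpha=\alpha_0$ (where $\delta(\alpha_0)=0$ and $2\alpha_0+2m-\sigma=\frac{d}{k+1}-d$) are obtained precisely by verifying the resulting exponent inequalities, not by absorbing logarithms into a generic $N^{-d}$ gain. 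In your write-up this is exactly the step you defer (``case-by-case inspection \dots carried out in Section~\ref{sele}''), which is circular since Section~\ref{sele} is where this lemma is proved; as it stands, \eqref{e:boundZYborderline} and the existence of $\alpha_\star$ in \eqref{e:boundZY} are asserted rather than established.
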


\begin{rem}
The prefactor $N^{\frac d2 - \delta(\alpha)}$ appearing in \eqref{e:boundZY} can be understood as follows.
As long as $\alpha$ is not too large, one would expect the term $\Wick{\left(\langle \nabla \rangle^\alpha Z_N  \right)^{k}}\langle \nabla \rangle ^\alpha Y_N - c_{N,2}^\alpha$ to converge to a non-trivial finite limit as $N \to \infty$. 
The regularity of this limit is given by $-\delta(\alpha)$ as already mentioned before the lemma.
It is however well known that this must break down at the latest when the regularity reaches $-\frac d2$, namely the regularity
of white noise, see for example \cite{CQ,variance} for an illustration of this phenomenon. 
Since this term contains $(k+1)\alpha$ derivatives, this suggests that the expectation appearing in \eqref{e:boundZY} is
at least of order $N^{(k+1)\alpha + \kappa}$, where $\kappa$ is such that $(k+1)\alpha + \kappa = 0$ when $\delta(\alpha) = \frac d2$.
Our result then states that this is in fact sharp \dash in particular there are no stronger subdivergences that haven't been taken
care of by our renormalisation.
\end{rem}

Now, we prove Theorem~\ref{new}.

\begin{proof}[Proof of Theorem~\ref{new}]
We first consider the case when the inequality \eqref{e:singular} is strict, which in particular
implies that
\begin{equ}[e:inequalityalpha]
(k+1) \Bigl(\frac{\sigma-d}2-m-\alpha\Bigr) + \delta(\alpha) > -\frac{d}2\;.
\end{equ}

Fix some $\alpha > \alpha_\star$ with $\alpha_\star$ as in Lemma~\ref{lpj}. Note that we have $\delta (\alpha) > 0$, since $\delta(\alpha) > \delta (\alpha_\star) > \delta(\alpha_0) =0$ holds by $\alpha > \alpha_\star > \alpha_0$. Thus, by choosing $\gamma < \delta(\alpha)$ sufficiently close to $\delta(\alpha)$, we have $\gamma>0$, $\gamma > \delta(\alpha) - \frac{d}{2}$ and by \eqref{e:inequalityalpha}, $(k+1)(\frac{\sigma -d }{2}-m-\alpha) + \gamma > - \frac{d}{2}$, which is the condition appearing in Lemma~\ref{anb}.    

Since $\alpha > \alpha_0$, it also follows from the second inequality in \eqref{e:subcritical} that one has 
\begin{align*}
0= \delta(\alpha_0) < \delta(\alpha) = k\left( \alpha + m - \frac{\sigma -d}{2}  \right) - (A-n_0 + \sigma - \alpha) < (k+1)\left( \alpha + m - \frac{\sigma - d}{2} \right)
\end{align*} 
and thus $\frac{\sigma - d}{2} - m - \alpha <0$. In particular, if condition~\ref{(i)} of Lemma~\ref{anb} is satisfied for $J=k+1$, then it is necessarily also satisfied for smaller values of $J$.
Writing $u_N \coloneqq P_N u$ and $v_N \coloneqq P_N v$, we have
\begin{align}
 &N^{-\gamma} H_{k+1}\left( \langle \nabla \rangle^\alpha u_N ; c_{N,1}^\alpha     \right) \notag \\
 & = N^{-\gamma} H_{k+1}\left( \langle \nabla \rangle^\alpha \left( Z_N + v_N - Y_N   \right);  c_{N,1}^\alpha     \right) \notag \\
 &= N^{-\gamma}\sum_{i=0}^{k+1} \binom{k+1}{i}\Wick{\left(  \langle \nabla \rangle^\alpha Z_N  \right)^{i}}   \left( \langle \nabla \rangle^\alpha \left(  v_N - Y_N   \right)\right)^{k+1-i} \notag \\
 &= N^{-\gamma} \sum_{i=0}^{k+1} \sum_{j=0}^{k+1-i} \binom{k+1}{i} \binom{k+1-i}{j}\Wick{\left(  \langle \nabla \rangle^\alpha Z_N  \right)^{i}} \left( - \langle \nabla \rangle^\alpha  Y_N \right)^{j} \left( \langle \nabla \rangle^\alpha   v_N \right)^{k+1-i-j} \notag \\
 &= N^{-\gamma}\Wick{\left(  \langle \nabla \rangle^\alpha Z_N  \right)^{k+1}} - N^{-\gamma}(k+1)\Wick{\left(  \langle \nabla \rangle^\alpha Z_N  \right)^{k}} \left(  \langle \nabla \rangle^\alpha  Y_N \right) \notag \\
 &\  + N^{-\gamma} \!\!\!\!\!\!\!\!\!\!\!\!\!\!\! \sum_{\substack{0\le i+j \le k+1 \\ (i,j)\neq (k+1,0),(k,1)}}  \binom{k+1}{i} \binom{k+1-i}{j}\Wick{\left(  \langle \nabla \rangle^\alpha Z_N  \right)^{i}} \left( - \langle \nabla \rangle^\alpha  Y_N \right)^{j} \left( \langle \nabla \rangle^\alpha   v_N \right)^{k+1-i-j}\;. \label{cx}
\end{align}
Our choices of $\alpha$ and $\gamma$ guarantee that the first term in the right-hand side of \eqref{cx} converges to $0$ in $\D'$ $\mathbb{P}$-almost surely as $N\rightarrow \infty$ in view of Lemma~\ref{anb}.
For the third term, we have 
\begin{align*}
&\Wick{\left(\langle \nabla \rangle^\alpha Z_N  \right)^i} \left(  \langle \nabla \rangle^\alpha Y_N   \right)^j  \left(  \langle \nabla \rangle^\alpha v_N   \right)^{k+1-i-j}\\
&= H_i\left(\langle \nabla \rangle^\alpha Z_N ; c_{N,1}^\alpha    \right)\left(  \langle \nabla \rangle^\alpha Y_N   \right)^j  \left(  \langle \nabla \rangle^\alpha v_N   \right)^{k+1-i-j}\\
&=\sum_{p=0}^{[\frac{i}{2}]} C_{i,i-2p} (c_{N,1}^\alpha)^p \left( \langle \nabla \rangle^\alpha Z_N \right)^{i-2p}\left(  \langle \nabla \rangle^\alpha Y_N   \right)^j  \left(  \langle \nabla \rangle^\alpha v_N   \right)^{k+1-i-j},  
\end{align*}
where $C_{i,i-2p}$ are the coefficients of the Hermite polynomial. Therefore, since $c_{N,1}^\alpha \simeq N^{2\alpha + 2m - (\sigma - d)}$, in order to prove that the third term in \eqref{cx} converges to $0$ as $N\rightarrow \infty$, it is enough to check that for each $i,j,p$,
\begin{align*}
N^{-\gamma + \left(\alpha + m - \frac{\sigma - d}{2}\right)2p}\left( \langle \nabla \rangle^\alpha Z_N \right)^{i-2p}\left(  \langle \nabla \rangle^\alpha Y_N   \right)^j  \left(  \langle \nabla \rangle^\alpha v_N   \right)^{k+1-i-j} \rightarrow 0
\end{align*}
as $N\rightarrow \infty$. In view of Lemmas~\ref{nimai},~\ref{hima},~\ref{vv} and Assumption~\ref{katei}, it is enough to prove that
\begin{align}
(i -2p + 2p)\left( \frac{\sigma - d}{2} - m- \alpha \right) + j(A -n_0 + \sigma - \alpha) + (k + 1 - i - j)(\beta - \alpha) + \gamma >0.  \label{abc}
\end{align}
We have from Assumption~\ref{katei} that 
\begin{align*}
&(i -2p + 2p)\left( \frac{\sigma - d}{2} - m- \alpha \right) + j(A -n_0 + \sigma - \alpha) + (k + 1 - i - j)(\beta - \alpha) \\
&= i \left( \frac{\sigma - d}{2} - m- \alpha \right) + j(A -n_0 + \sigma - \alpha) + (k + 1 - i - j)(\beta - \alpha)\\
&\begin{cases}
      > i \left( \frac{\sigma - d}{2} - m- \alpha \right) + (k+ 1 -i)(A -n_0 + \sigma - \alpha) \ge -\delta(\alpha) \ \ \ \mbox{if}\  i+j<k+1 \\
      = i \left( \frac{\sigma - d}{2} - m- \alpha \right) + (k+ 1 -i)(A -n_0 + \sigma - \alpha) > - \delta(\alpha) \ \ \ \mbox{if}\ i+j= k+1,
         \end{cases} 
\end{align*}
where in the last inequality, we used the condition $(i,j)\neq (k+1,0),(k,1)$ and the inequality $ \frac{\sigma - d}{2} - m- \alpha <  A -n_0 + \sigma - \alpha$, which follows from the second inequality in \eqref{e:subcritical}. 
Therefore, provided that $\gamma$ is close enough to $\delta(\alpha)$, we indeed have \eqref{abc} and thus the third term in \eqref{cx} converges to $0$ in $\mathcal{D}'$ $\mathbb{P}$-almost surely as $N\rightarrow \infty$.
Thus, from Lemma~\ref{lpj}, we get ${\mathbb P}(u(t)\in A^{\alpha, \gamma}) = 1$. On the other hand, from $N^{-\gamma}c_{N,2}^\alpha \rightarrow \infty$ and $ N^{-\gamma}\Wick{\left(  \langle \nabla \rangle^\alpha Z_N (t)  \right)^{k+1}} \rightarrow 0$ as $N\rightarrow \infty$ $\mathbb{P}$-almost surely, which follow from Lemmas~\ref{anb} and~\ref{lpj}, we get ${\mathbb P}(Z(t)\in A^{\alpha, \gamma}) = 0$. This proves the mutual singularity of the laws of $Z(t)$ and $u(t)$.

We now turn to the borderline case when one has equality in \eqref{e:singular}.
In this case, we set $\alpha = \alpha_0$ and we choose any $\gamma\in (\frac{1}{2}, 1)$.
Similarly to above, we get
\begin{align}
&(\log N)^{-\gamma} H_{k+1}\left( \langle \nabla \rangle^\alpha u_N ; c_{N,1}^\alpha     \right) \notag \\
&= (\log N)^{-\gamma}\Wick{\left(  \langle \nabla \rangle^\alpha Z_N  \right)^{k+1}} - (\log N)^{-\gamma}(k+1)\Wick{\left(  \langle \nabla \rangle^\alpha Z_N  \right)^{k}} \left(  \langle \nabla \rangle^\alpha  Y_N \right) \label{cxx}\\
 &\quad + (\log N)^{-\gamma}\sum_{\substack{0\le i+j \le k+1 \\ (i,j)\neq (k+1,0),(k,1)}}  \binom{k+1}{i} \binom{k+1-i}{j} \notag \\
&\quad \hspace{4cm} \times\Wick{\left(  \langle \nabla \rangle^\alpha Z_N  \right)^{i}} \left( - \langle \nabla \rangle^\alpha  Y_N \right)^{j} \left( \langle \nabla \rangle^\alpha   v_N \right)^{k+1-i-j}.  \notag 
 \end{align}
Then, from Lemmas~\ref{anb} and~\ref{pro}, the first and third terms in the right-hand side of \eqref{cxx} converges to 0 in $\D'$ $\mathbb{P}$-almost surely as $N\rightarrow \infty$. Therefore, from \eqref{e:boundZYborderline}, we get $\mathbb{P}( u(t)\in B^{\alpha, \gamma}) = 1$. On the other hand, from Lemmas~\ref{anb} and~\ref{lpj}, we get $(\log N)^{-\gamma}c_{N,2}^\alpha \rightarrow \infty$ and $ (\log N)^{-\gamma}\Wick{\left(  \langle \nabla \rangle^\alpha Z_N (t)  \right)^{k+1}} \rightarrow 0$ as $N\rightarrow \infty$ $\mathbb{P}$-almost surely. This means $\mathbb{P}( Z(t)\in B^{\alpha, \gamma}) = 0$.
This proves the orthogonality of the law of $u(t)$ with respect to the law of $Z(t)$.
\end{proof}

\begin{rem}\label{rem:KPZcase}
We only considered the case that $N_i(\nabla) = \langle \nabla \rangle^{n_i}$ for $i=0,1,\ldots,k$ and $M(\nabla) = \langle \nabla \rangle^m$, but our argument is applicable to the case of more 
general Fourier multipliers.
For example, when $N_i$ and $M$ satisfy $|N_i(l)|_{\mathbb{C}} \lesssim |l|^{n_i}$, $|M(l)|_{\mathbb{C}} \lesssim |l|^{m}$ as $|l| \rightarrow \infty$, and the sequence $\{ c_{N,2}^\alpha\}_N$ diverges exactly as stated in Lemma~\ref{lpj}, it is straightforward to extend our result to this case. 
From the proof of Lemma~\ref{lpj}, the value of $c_{N,2}^\alpha$ is given by
\begin{equation}\label{teisu}
c_{N,2}^\alpha = k! \sum_{l_i \in \mathbb{Z}^d_N} \frac{\langle l_1 + \cdots  + l_k \rangle^{\alpha}N_0(l_1 + \cdots + l_k)}{\sum_{i=1}^k \langle l_i \rangle^\sigma +\langle l_1 + \cdots l_l \rangle^\sigma }\prod_{i=1}^k 
\langle l_i \rangle^{\alpha -\sigma} N_i(l_i) |M(l_i)|^2 .
\end{equation}
The divergence of $\{ c_{N,2}^\alpha\}_N$ is crucial in the proof of Theorem~\ref{new}, and our argument is not applicable if it fails to diverge.
This is the case for some equations, for example, the KPZ equation and the stochastic Burgers equation. In our setting, the (one-dimensional) KPZ equation is the case that $d=1$, $\sigma =2$, $k=2$, $N_0(l) =-1$, $N_1(l)=N_2(l)= i l$, and $M(l)=1$ for $l\in\mathbb{Z}$. Inserting these into \eqref{teisu}, we see that $c_{N,2}^\alpha =0$ from the antisymmetry of $N_1$  and $N_2$, which is consistent with the fact that 
the conclusion of Theorem~\ref{new} fails there. The case of stochastic Burgers equation is similar.
See Section~\ref{sec:appKPZ}.
\end{rem}
The first part of Remark~\ref{rem:KPZcase} can be summarized as follows.
\begin{thm}\label{thm:generalcase}
Consider the setting of Theorem~\ref{new}, but in the definitions of $Z$ and $Y^N$, we replace 
$\scal{\nabla}^m \eta$ by $M(i\nabla)\eta$ and we set
\begin{equ}
F(u) = N_0(i\nabla) \prod_{i=1}^k (N_i(i\nabla)u)\;,
\end{equ}
where $M$ and $N_i$ are continuous functions $\R^d \to \R$ such that
\begin{equation}\label{growthMN}
|M(p)| \lesssim |p|^{m}\;,\qquad
|N_i(p)| \lesssim |p|^{n_i}\;,
\end{equation} 
as $|p|\rightarrow \infty$. 
Then, the conclusion of Theorem~\ref{new} still holds, under the additional assumption that the sequence of constants $c_{N,2}^\alpha$ defined by \eqref{e:cN2} satisfies the condition \eqref{e:boundc2}.
\end{thm}

\begin{proof}
We proceed almost exactly the same way as in the proof of Theorem~\ref{new}.
More precisely, we show that for the same $\alpha, \gamma$ as in the proof of Theorem~\ref{new}, we have
$\mathbb{P}(u(t)\in A^{\alpha,\gamma} )=1$ and $\mathbb{P}(Z(t) \in A^{\alpha,\gamma})=0$ ($\mathbb{P}(u(t)\in B^{\alpha,\gamma} )=1$ and $\mathbb{P}(Z(t) \in B^{\alpha,\gamma})=0$ in the borderline case). Note that Lemmas~\ref{nimai}, \ref{hima}, \ref{anb}, \eqref{e:boundZY} and \eqref{e:boundZYborderline} still hold true in the setting of Theorem~\ref{thm:generalcase} since their proofs essentially depend only on the \textit{upper} bounds \eqref{growthMN} for the Fourier multipliers. For example, the proofs of Lemmas~\ref{hima} and \ref{anb} are reduced to how we control the sums \eqref{sumY} and \eqref{sum:Z:}, respectively.
The corresponding quantities in the current setting are given by
\[ \sum_{\tau \in \mathfrak{S}_k} \sum_{l_i \in \mathbb{Z}^d_N} \langle l_1 + \cdots + l_k \rangle^{2\alpha} \frac{\langle l_1 + \cdots + l_k \rangle^{-\sigma}}{\sum_{i=1}^k \langle l_i \rangle^\sigma + \langle l_1 + \cdots + l_k \rangle^{\sigma}}\prod_{i=1}^k \langle l_i \rangle^{-\sigma} |M(l)|^2 |N_i(l)| |N_{\tau(i)}(l)|   \]
and
\[ \sum_{l_i \in \mathbb{Z}^d_N} \langle l_1 + \cdots + l_{k+1} \rangle^{2\beta} \prod_{i=1}^J \langle l_i \rangle^{-\sigma+ 2\alpha} |M(l)|^2 .   \]
Since these are dominated by \eqref{sumY} and \eqref{sum:Z:} by the condition \eqref{growthMN}, we 
conclude that the conclusions of Lemmas~\ref{hima} and \ref{anb} still hold true in this broader setting. 
The same applies to Lemma~\ref{nimai}, \eqref{e:boundZY} and \eqref{e:boundZYborderline}. 
 Consequently, the fact that $\mathbb{P}(u(t)\in A^{\alpha,\gamma} )=1$ follows in the exact same way 
 as in the proof of Theorem~\ref{new}. For the proof that $\mathbb{P}(Z(t)\in A^{\alpha,\gamma} )=0$, we need to show that
\begin{equ}[e:wantedlimsup]
\limsup_{N\rightarrow \infty} N^{-\gamma} \Bigl|\left\langle  \Wick{(\langle \nabla \rangle^{\alpha} Z_N(t))^k} + (k+1)c_{N,2}^\alpha, 1 \right\rangle_{L^2(\mathbb{T}^d)}\Bigr| > 0\;,
\end{equ}
(or the analogous statement with $N$ replaced by $\log N$ in the borderline case) almost surely. 
We proved this in the case $N_i(p)= \langle p \rangle^{n_i}$ and $M(p) = \langle p \rangle^m$ by combining Lemma~\ref{anb} and the divergence of $c_{N,2}^\alpha$ as in \eqref{e:boundc2}. 

Unfortunately, our proof of \eqref{e:boundc2} depends on the fact that $N_i(p) \simeq |p|^{n_i}$, which
is the only point in the proof that a \textit{lower} bound is used and assuming the condition 
$|N_i(p)| \simeq |p|^{n_i}$ alone is not sufficient. Indeed, \eqref{e:boundc2} can typically fail to hold when 
 $N_i$ is antisymmetric as explained in Remark~\ref{rem:KPZcase} and as recently exploited 
 in \cite{LeonardoJames}. If however we take \eqref{e:boundc2} as an additional assumption, 
then \eqref{e:wantedlimsup} follows immediately from Lemma~\ref{anb} since the term inside the
lim sup is the sum of a divergent term and a term that converges to $0$.
\end{proof}

\section{Applications}\label{appli}

In this section, we see some applications of our main results.

\subsection{The \TitleEquation{\Phi ^4}{Phi^4}-model}\label{sec:appPhi4}

In dimensions $2$ and $3$, it is known \cite{MW17,Pavlos,Konstantin} that the classical $\Phi^4_d$ model \eqref{e:Phi4}
on the torus arises as the unique invariant measure 
for the Markov process associated to the (formal) SPDE:
\begin{equation}\label{eq:AppPhi4eq}
\partial _t u + (1- \Delta ) u + \lambda u^3 = \sqrt{2}\xi , \quad \mbox{on}\ \mathbb{R}_+\times \mathbb{T}^d
\end{equation} 
where $\xi$ is a Gaussian space-time white noise.
The cube appearing in this equation needs to be interpreted with an appropriate renormalization as
in \cite{Hai14}.
For more details on the $\Phi^4$-model and its stochastic quantization, we refer to the introductions of \cite{AK20, GH21, AK22}.
We remark that \eqref{eq:AppPhi4eq} is a special case of \eqref{koll} with $k=3$ and $m=n_0=\cdots =n_3=0$.

The advantage of Theorem~\ref{new} is that in the case that $d=3$ for any global-in-time solution $u(t)$ of \eqref{eq:AppPhi4eq} in the sense of of singular SPDEs, we can prove the singularity of the law of $u(t)$ ($t>0$) with respect to $\mu_0$ without discussing the uniqueness of the $\Phi ^4_3$-measure on ${\mathbb T}^d$, as follows.

\begin{cor}\label{cor:Phi4}
Let $\{ u(t)\,:\, t\in [0,T]\}$ be any global-in-time solution of the dynamic $\Phi^4_3$-model constructed 
in \cite{Hai14}.
Then, for any $t>0$ the law of $u(t)$ is singular with respect to the Gaussian free field measure.
\end{cor}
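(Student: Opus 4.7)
The plan is to recognise the dynamic $\Phi^4_3$ equation \eqref{eq:AppPhi4eq} as an instance of \eqref{koll} with $d = 3$, $\sigma = 2$, $k = 3$, and $m = n_0 = n_1 = n_2 = n_3 = 0$, so that $\mathcal{L} = \partial_t + (1-\Delta)$ and $F(u) = u^3$. The factor $\sqrt{2}$ multiplying $\xi$ only rescales the noise by a deterministic constant, which multiplies $c_{N,1}^\alpha$ and $c_{N,2}^\alpha$ by fixed positive factors and has no effect on the singularity argument of Theorem~\ref{new}.

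First I would carry out the parameter check. One computes $A = \tfrac{1}{2}\cdot 3 \cdot (\sigma - d) = -\tfrac{3}{2}$, so Assumption~\ref{w43} reads $\tfrac{\sigma-d}{2} = -\tfrac{1}{2} < 0$ and Assumption~\ref{ass:subcritical} becomes $A + \tfrac{d+\sigma}{2} = 1 > 0 = n_0 - m$; both hold. The singularity criterion \eqref{e:singular} reads $A + \tfrac{\sigma}{2} = -\tfrac{1}{2} \le 0 = n_0 - m$, with strict inequality, so we land in the non-borderline regime of Theorem~\ref{new}. The remaining hypothesis, Assumption~\ref{katei}, is precisely what the regularity-structures solution theory of \cite{Hai14} furnishes at any positive time: for any global-in-time solution, the Da Prato--Debussche decomposition $u = Z - Y + v$ produces a remainder $v$ whose spatial regularity at fixed $t > 0$ is almost surely strictly greater than $\tfrac{1}{2} = A - n_0 + \sigma$, as asserted in Remark~\ref{zaq}.

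Applying Theorem~\ref{new} then gives that the law of $u(t)$ is mutually singular with respect to the law of $Z(t)$ at every $t > 0$. To identify this latter law with $\mu_0$, one observes that the stationary solution of $\partial_t Z + (1-\Delta)Z = \sqrt{2}\,\xi$ has, at each fixed time, the centred Gaussian distribution on $\D'(\mathbb{T}^3)$ with covariance $(1-\Delta)^{-1}$, which is exactly Nelson's massive free field $\mu_0$. Since Theorem~\ref{new} does all the serious analytic work, the only delicate point here is the reliance on Assumption~\ref{katei} for \emph{every} global-in-time solution rather than for a single specifically constructed one; this is not a genuine obstacle because the decomposition $u = Z - Y + v$ with the required regularity of $v$ is intrinsic to the regularity-structures solution concept used in \cite{Hai14}, and so holds automatically.
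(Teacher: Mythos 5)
Your proposal is correct and follows essentially the same route as the paper: identify \eqref{eq:AppPhi4eq} as the case $d=3$, $\sigma=2$, $k=3$, $m=n_0=\cdots=n_3=0$ of \eqref{koll}, note that the Da Prato--Debussche-type decomposition provided by the solution theory of \cite{Hai14} yields a remainder $v(t)\in\mathcal{C}^\beta(\mathbb{T}^3)$ for $\beta<1>A-n_0+\sigma=\tfrac12$ so that Assumption~\ref{katei} holds, and then apply Theorem~\ref{new}. Your additional checks (the explicit verification of Assumptions~\ref{w43}--\ref{ass:subcritical} and of \eqref{e:singular}, the harmlessness of the $\sqrt{2}$ and $\lambda$ factors, and the identification of the stationary law of $Z(t)$ with $\mu_0$) are correct and only make explicit what the paper leaves implicit.
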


\begin{proof}
When we study \eqref{eq:AppPhi4eq} with $d=3$ by the singular SPDE method (including both the regularity structure and the paracontrolled calculus), one obtains the decomposition 
\begin{equation}\label{eq:corPhi4-01}
u(t) = Z(t) + \lambda Y(t) + v(t)
\end{equation}
with $u(t) \in  {\mathcal C}^\beta ({\mathbb T}^3)$ for all $\beta <-\frac{1}{2}$ and $v(t) \in {\mathcal C}^\beta ({\mathbb T}^3)$ for all $\beta <1$.
In particular, Assumption~\ref{katei} is satisfied.
Hence, we see that Theorem~\ref{new} is applicable and the assertion follows.
\end{proof}

\begin{rem}
As in the proof of Corollary~\ref{cor:Phi4}, to apply Theorem~\ref{new} we only use the fact that $u(t)$ is decomposed as in Assumption~\ref{katei}, which is the common technique in singular SPDEs.
This is the reason why Corollary~\ref{cor:Phi4} holds for any global-in-time solution constructed by the singular SPDE method.
In particular, the marginal distributions at time $t>0$ of the global-in-time solutions obtained in \cite{MW17, GH19, AK20, JP23} are singular with respect to the Gaussian free field measure, and so is the $\Phi ^4_3$-measure constructed in \cite{AK20}.
\end{rem}

\begin{rem}
In \cite[Section 2.8.2]{BCCH21} the $\Phi ^4_{4-\delta}$-model is considered.
This model is given by \eqref{eq:AppPhi4eq} with replacing $\xi$ by another Gaussian noise whose 
(lack of) regularity matches that of a hypothetical space-time white noise ``in dimension ${4-\delta}$''.
The $\Phi ^4_{4-\delta}$-model is also subcritical in the sense of regularity structures and 
it was shown in \cite{CMW23} that it admits global-in-time solutions.
For $\delta \in (0,1)$, by applying Theorem~\ref{new} one obtains the singularity of the law of 
$u(t)$ with respect to the Gaussian free field measure.
\end{rem}

\begin{rem}
A closely related problem is the study of $\Phi^4$ measures with a Hartree-type nonlinearity, which (at least formally) arises
as invariant measure for the SPDE
\begin{equ}[e:Hartree]
\partial _t u + (1- \Delta ) u + \lambda u \scal{\nabla}^{-\beta}(u^2) = \sqrt{2}\xi , \quad \mbox{on}\ \mathbb{R}_+\times \mathbb{T}^d\;.
\end{equ}
While this doesn't quite fit the form of the nonlinearity considered in \eqref{koj}, it is clear both from \eqref{e:singular}
and from our proof that the relevant quantity is the total number of derivatives $\sum_{i=0}^k n_i$ and not the
way in which they are distributed across powers of $u$. The condition \eqref{e:singular} then suggests that the 
invariant measure for the renormalised version of \eqref{e:Hartree} is singular with respect to the corresponding Gaussian 
measure if and only if $\beta \le \frac{3d-8}{2}$, which is consistent with the condition $\beta \le \frac12$
in dimensions $3$ which arose in the construction of \cite{Bjoern1,Bjoern2,OOT}.
\end{rem}

\subsection{The fractional \TitleEquation{\Phi^4}{Phi^4}-model}\label{sec:appfrPhi4}

Here, we consider the fractional $\Phi^4$-model, which is treated for example in \cite{GH21, Duc22, Duc23, liu2024, DGR24}
and is formally given by
\begin{equation}\label{paru}
\mu _{\Phi ^4_{d, \sigma}} (d\phi) = \frac{1}{Z}e^{-\lambda \int_{\mathbb{T}^d} \phi^4(x) dx}\mu_0^\sigma (d\phi),
\end{equation}
where $\mu_0^\sigma$ is the Gaussian measure on $\D'$ with covariance $(1-\Delta)^{-\frac{\sigma}{2}}$, and $\lambda >0$.
We remark that the covariance of $\mu_0^\sigma$ is sometimes given by $(1+(-\Delta)^{\frac{\sigma}{2}})^{-1}$ in the literature, but the replacement by $(1-\Delta)^{-\frac{\sigma}{2}}$ does not make much of a difference.
The parabolic stochastic quantization equation of $\mu _{\Phi ^4_{d, \sigma}}$ is given by
\begin{equation}\label{sq}
\partial_t u +  (1 - \Delta)^{\frac{\sigma}{2}}  u + \lambda u^{ 3} = \sqrt{2} \xi \quad \mbox{on}\ \mathbb{R}_+\times \mathbb{T}^d,
\end{equation}
where $\xi$ is Gaussian space time white noise on $\mathbb{R}_+\times \mathbb{T}^d$.
In the case $\sigma =2$, we recover the usual $\Phi ^4$ model.

When $\sigma \leq d$, we need to introduce proper renormalization for \eqref{paru} and \eqref{sq} to make sense, and the condition $\sigma>\frac{1}{2}d$ corresponds to the subcritical regime.
Similarly to the $\Phi ^4_3$-measure, despite its formal expression \eqref{paru}, $\mu _{\Phi ^4_{d, \sigma}}$ is not always absolutely continuous with respect to $\mu_0^\sigma$.
When $\frac{3}{4}d<\sigma\le d$, the $4$th Wick power $\Wick{\phi^4}$ is well-defined as a distribution-valued random variable $\mu_0^\sigma$-almost everywhere. Because it is also known that $e^{-\lambda \int _{\mathbb{T}^d}\Wick{\phi^4} (x)dx}\in L^p(\mu_0^\sigma)$ for any $p\ge 1$ in this case, $\mu _{\Phi ^4_{d, \sigma}}$ is absolutely continuous with respect to $\mu_0^\sigma$ (see e.g. \cite{Nag23}). 
On the other hand, when $\frac{1}{2}d<\sigma\le \frac{3}{4}d$, we need further renormalization beyond Wick ordering and it is expected that the measure $\mu$ is singular with respect to $\mu_0$ if it exists.
We can formally see this transition from our result: in the setting of this paper, the equation \eqref{sq} 
corresponds to the case $k=3$, $m=n_0=\cdots =n_3=0$, and hence $A = \frac{3}{2}(\sigma -d)$. Thus, the condition $-\frac{d}{2} <  A + \frac{\sigma}{2} \le -m + n_0$ in Theorem~\ref{new} is equivalent to $\frac{1}{2}d < \sigma \le \frac{3}{4}d$.
We summarize the argument here as a corollary.

\begin{cor}\label{cor:fPhi4}
For $\frac{3}{4}d<\sigma\le d$, the fractional $\Phi ^4$-measure $\mu _{\Phi ^4_{d, \sigma}}$ is absolutely continuous with respect to $\mu_0^\sigma$.
For $\frac{1}{2}d<\sigma\le \frac{3}{4}d$, $\mu _{\Phi ^4_{d, \sigma}}$ is singular with respect to $\mu_0^\sigma$  if it is constructed as a stationary measure of \eqref{sq}.
\end{cor}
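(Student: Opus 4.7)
The plan is to treat the two regimes separately. For $\frac{3}{4}d<\sigma\le d$ I will build the Radon--Nikodym derivative directly, while for $\frac{1}{2}d<\sigma\le \frac{3}{4}d$ I will invoke Theorem~\ref{new} with the parameter choice dictated by \eqref{sq}.

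For $\sigma\in(\frac{3}{4}d,d]$, the Gaussian free field $\mu_0^\sigma$ is supported on $\mathcal{C}^{\frac{\sigma-d}{2}-\kappa}(\mathbb{T}^d)$ for every $\kappa>0$. The fourth Wick power $\Wick{\phi^4}$ is then constructed as the $L^2(\mu_0^\sigma)$-limit of $\Wick{(P_N\phi)^4}$; the threshold $\sigma>\frac{3}{4}d$ is precisely what makes the relevant fourth-chaos variance summable. A Nelson-type estimate, as recorded in \cite{Nag23}, then shows that $\exp\bigl(-\lambda\int_{\mathbb{T}^d}\Wick{\phi^4}(x)\,dx\bigr)\in\bigcap_{p<\infty}L^p(\mu_0^\sigma)$. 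Combined with positivity and finiteness of the partition function $Z$, this turns the formal expression \eqref{paru} into a genuine Radon--Nikodym density, yielding $\mu_{\Phi^4_{d,\sigma}}\ll \mu_0^\sigma$.

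For $\sigma\in(\frac{1}{2}d,\frac{3}{4}d]$, I match \eqref{sq} to \eqref{koll} with $k=3$ and $m=n_0=n_1=n_2=n_3=0$ (the overall $\sqrt{2}$ in front of $\xi$ only rescales the covariance and is irrelevant for the singularity argument). In this parametrisation one computes $A=\tfrac{1}{2}\sum_{i=1}^{3}(\sigma-d)=\tfrac{3}{2}(\sigma-d)$. Assumption~\ref{w43} reduces to $\sigma<d$, which is automatic in this range; both conditions of Assumption~\ref{ass:subcritical} reduce to $2\sigma-d>0$; and the singularity condition $A+\tfrac{\sigma}{2}\le n_0-m$ of Theorem~\ref{new} becomes $2\sigma\le \tfrac{3}{2}d$, i.e.\ $\sigma\le \tfrac{3}{4}d$. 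Thus the regime under consideration is exactly where Theorem~\ref{new} applies, and the borderline case $\sigma=\tfrac{3}{4}d$ is covered by the logarithmic part of the theorem.

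It remains to verify Assumption~\ref{katei} for a stationary solution of \eqref{sq} with marginal $\mu_{\Phi^4_{d,\sigma}}$. Under the subcritical condition $\sigma>\tfrac{1}{2}d$, the fractional stochastic quantization equation admits a local solution theory via either regularity structures or paracontrolled calculus (see \cite{GH21,Duc22,Duc23,DGR24}), and the Da~Prato--Debussche-type decomposition $u=Z-Y+v$ produces a remainder $v$ whose spatial regularity improves over that of $Y$ by the Schauder gain of $\mathcal{L}$; in particular $v(t)\in\mathcal{C}^{\beta}(\mathbb{T}^d)$ for some $\beta>A-n_0+\sigma$ at any fixed $t>0$. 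By hypothesis $\mu_{\Phi^4_{d,\sigma}}$ is the marginal law at any positive time of such a (stationary) solution, so Assumption~\ref{katei} is in force and Theorem~\ref{new} delivers the mutual singularity of the laws of $u(t)$ and $Z(t)$, i.e.\ of $\mu_{\Phi^4_{d,\sigma}}$ and $\mu_0^\sigma$. I expect the main obstacle to be precisely the last bookkeeping step: one needs to match the Hermite-renormalised decomposition used in Theorem~\ref{new} to the one produced by the existing local solution theory, making sure that the counterterms coincide and that the improvement of $v$ over $Y$ is strict in the critical case $\sigma=\tfrac{3}{4}d$.
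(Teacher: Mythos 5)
Your proposal is correct and follows essentially the same route as the paper: absolute continuity for $\sigma\in(\tfrac34 d,d]$ via the $L^p(\mu_0^\sigma)$-integrability of $e^{-\lambda\int\Wick{\phi^4}}$ (as in \cite{Nag23}), and singularity for $\sigma\in(\tfrac12 d,\tfrac34 d]$ by matching \eqref{sq} to \eqref{koll} with $k=3$, $m=n_0=\cdots=n_3=0$, so that $A=\tfrac32(\sigma-d)$ and the conditions of Theorem~\ref{new} (including the borderline case) reduce exactly to this parameter range. Your additional bookkeeping on Assumption~\ref{katei} via the Da Prato--Debussche decomposition is consistent with what the paper takes for granted when it requires the measure to arise as a stationary measure of \eqref{sq}.
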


\begin{rem}\label{rem:fPhi4}
This is consistent with the results of \cite{Barashkov,liu2024}
where the authors show that the fractional $\Phi^4$ measure in $3$ dimensions is singular with respect
to the free field measure when $\sigma \le 9/4$.
\end{rem}


\subsection{An equation similar to the KPZ equation}\label{sec:appKPZ}

The KPZ equation is also a typical model treated in the field of singular SPDEs, and is formally given by
\begin{equation}\label{eq:AppKPZ}
\partial _t h + (1- \partial _x^2 ) h - \left| \partial _x h\right| ^2 = \xi , \quad \mbox{on}\ \mathbb{R}_+\times \mathbb{T}
\end{equation} 
where $\xi$ is Gaussian space-time white noise on $\mathbb{R}_+\times \mathbb{T}$.
Because of the expected singularity of $h$, the term $\left| \partial _x h\right| ^2$ in \eqref{eq:AppKPZ} needs renormalization, see \cite{Hai13, GP17} for details on the background and solution theory for the KPZ equation.

Unfortunately, we cannot apply our results to \eqref{eq:AppKPZ} (see Remark~\ref{rem:KPZcase}) and in fact
the conclusion fails to hold in this case.
Instead of \eqref{eq:AppKPZ}, we consider the following equation, which is similar but different from \eqref{eq:AppKPZ}
\begin{equation}\label{eq:AppKPZ2}
\partial _t h + (1- \partial _x^2 ) h - \left| \langle \partial _x \rangle h\right| ^2 = \xi , \quad \mbox{on}\ \mathbb{R}_+\times \mathbb{T}
\end{equation}
where $\langle \partial _x \rangle := (1+ \partial _x ^2 )^{\frac{1}{2}}$ and $\xi$ is Gaussian space time white noise on $\mathbb{R}_+\times \mathbb{T}$.
We remark that \eqref{eq:AppKPZ2} is a special case of \eqref{koll}, in particular the case that $k=2$ and $m=n_0=0$ and $n_1=n_2=1$.
The nonlinear term $\left| \langle \partial _x \rangle h\right| ^2$ generates the same singularity as $\left| \partial _x h\right| ^2$ in \eqref{eq:AppKPZ}, so that one expects to be able to obtain 
a similar local solution theory (see \cite{Hai14} for the approach by regularity structures and \cite{GIP15, GP17} for the approach by paracontrolled calculus). This suggests that one can write
\begin{equation}\label{eq:corKPZ2-01}
h(t) = e^{-t(1-\Delta)}h(0) + Z(t) + I(\Wick{(\langle \partial _x \rangle Z)^2})(t) + v(t)
\end{equation}
with $h(t) \in  {\mathcal C}^\beta ({\mathbb T}^3)$ for all $\beta <\frac{1}{2}$ and $v(t) \in {\mathcal C}^\beta ({\mathbb T}^3)$ for all ${\beta <\frac{3}{2}}$.

By this observation, if the local-in-time solution $h$ of \eqref{eq:AppKPZ2} can be extended globally in time, Assumption~\ref{katei} is satisfied and we can conclude from Theorem~\ref{new} that for $t>0$ the law of $h(t)$ is singular with respect to the law of a Brownian bridge.
Unfortunately, the existence of the global-in-time solution of \eqref{eq:AppKPZ2} is not known, while it has been proved in the case of \eqref{eq:AppKPZ} (see \cite[Remark 1.5]{Hai14} or \cite[Section 7]{GP17}).

\section{Proofs of lemmas}\label{sele}

In this section, we prove the lemmas stated in Section~\ref{se2}.
Gaussian space-time white noise $\eta$ is an isometric operator $\eta: L^2(\mathbb{R}\times \mathbb{T}^d) \rightarrow L^2(\Omega,\mathcal{F},\mathbb{P})$ such that the range is included in a Gaussian space $H$. Then, $\{ \hat W (s,l)\}_{l\in\mathbb{Z}^d} = \{ \langle \eta , \one_{[0,s]} e_l \rangle\}_{l\in\mathbb{Z}^d}$ defines a family of $\mathbb{C}$-valued Brownian motions that are i.i.d.\ save for the fact that $\hat W (t,l)$ coincides with the complex conjugate of $\hat W (t,-l)$ and $\mathbb{E}[|\hat W (t,l)|^2]= t$ for any $l\in\mathbb{Z}^d$.
With these notations, the Fourier coefficients of the stationary solution $Z(t)$ of the equation \eqref{koji} are given by
\begin{equation}\label{kaik}
\hat Z (t,l) =  \int^t_{-\infty} e^{(s-t)\langle l \rangle^{\sigma}} \langle l \rangle^m   d \hat W (s,l)
\end{equation}
for $l\in\mathbb{Z}^d$. As an immediate consequence, we have the following expression.

\begin{lemm}\label{bu}
For $N,M\in \mathbb{N}, t_1,t_2\in\mathbb{R}$ and $x_1,x_2 \in \mathbb{T}^d$, there holds
\[ \mathbb{E} \left[Z_M(t_1,x_1)Z_N(t_2,x_2)    \right] = \sum_{l\in \mathbb{Z}^d, |l|\le M \wedge N} e^{- |t_1 -t_2|\langle l \rangle^\sigma}   \langle l \rangle^{2m-\sigma} e_l(x_1)e_{-l}(x_2). \]
\end{lemm}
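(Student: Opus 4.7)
The proof is a direct covariance computation using the Fourier representation \eqref{kaik}. The plan is to expand $Z_M(t_1,x_1) Z_N(t_2,x_2)$ as a double Fourier series, use the Itô isometry mode by mode, and then evaluate the resulting one-dimensional time integral.

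First I would write
\[
Z_M(t_1,x_1)Z_N(t_2,x_2) = \sum_{\substack{|l|\le M \\ |k|\le N}} \hat Z(t_1,l)\,\hat Z(t_2,k)\, e_l(x_1) e_k(x_2),
\]
and interchange expectation and sum, so that the problem reduces to evaluating
$\mathbb{E}[\hat Z(t_1,l)\,\hat Z(t_2,k)]$ for each pair $(l,k)$. Next, I would invoke the Itô isometry for stochastic integrals against the complex-valued Brownian motions $\hat W(\cdot,l)$. The key fact here is the orthogonality relation
\[
\mathbb{E}[d\hat W(s,l)\,d\hat W(s',k)] = \delta_{l,-k}\,\delta(s-s')\,ds\,ds',
\]
which is a consequence of the isometric property of $\eta$ together with the reality constraint $\hat W(t,-l)=\overline{\hat W(t,l)}$ (equivalently, the identity $\int_{\mathbb{T}^d} e_l(x) e_k(x)\,dx = \delta_{l,-k}$). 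This collapses the double sum to a single sum over $l$ with $k=-l$, and replaces the stochastic double integral by a deterministic time integral.

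Since $\langle -l\rangle = \langle l\rangle$, the surviving contribution is
\[
\mathbb{E}[\hat Z(t_1,l)\,\hat Z(t_2,-l)]
= \langle l\rangle^{2m}\int_{-\infty}^{t_1\wedge t_2} e^{(s-t_1)\langle l\rangle^\sigma}\,e^{(s-t_2)\langle l\rangle^\sigma}\,ds.
\]
Factoring $e^{-(t_1+t_2)\langle l\rangle^\sigma}$ out and integrating $e^{2s\langle l\rangle^\sigma}$ up to $t_1\wedge t_2$ gives a factor proportional to $\langle l\rangle^{-\sigma} e^{-|t_1-t_2|\langle l\rangle^\sigma}$, and combining with the prefactor $\langle l\rangle^{2m}$ and $e_l(x_1) e_{-l}(x_2)$ yields the stated identity (up to constants that depend on the normalisation of the white noise).

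The computation is entirely routine; the only point requiring care is the correct bookkeeping of the complex-Brownian-motion covariance, in particular the fact that $\hat W(\cdot,l)$ and $\hat W(\cdot,k)$ are not independent when $k=-l$. This is what produces the pairing $e_l(x_1)e_{-l}(x_2)$ rather than $e_l(x_1)\overline{e_l(x_2)}$ on the right-hand side, and it is the sole place where the structure of the driving noise enters.
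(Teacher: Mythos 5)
Your computation is correct and is essentially the paper's own argument: Lemma~\ref{bu} is stated there as an immediate consequence of the mode-by-mode representation \eqref{kaik}, i.e.\ exactly the It\^o-isometry calculation with the pairing $\mathbb{E}[d\hat W(s,l)\,d\hat W(s',k)]=\delta_{l,-k}\,\delta(s-s')\,ds\,ds'$ and the elementary time integral that you carry out. Your closing caveat about normalisation is apt (with the convention $\mathbb{E}[|\hat W(t,l)|^2]=t$ the integral produces an extra factor $\tfrac12$, which the paper suppresses and which is harmless since the lemma is only used up to multiplicative constants), so nothing further is needed.
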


We use the notations introduced in Section~\ref{piyo} in the following proofs.
First, we prove Lemma~\ref{hima}. We omit the proof of Lemma~\ref{nimai} because it can be proven by a similar argument to the proof of Lemma~\ref{hima}.

\begin{proof}[Proof of Lemma~\ref{hima}]
We consider the case $n_0 = 0$ since the general case then follows easily.
By the hypercontractivity of Gaussian polynomials
\begin{align}
\mathbb{E}\left[ \| Y^N (t) \|_{W^{\alpha, 2p}(\mathbb{T}^d)}^{2p}  \right] &= \mathbb{E}\left[ \| \langle \nabla \rangle^{\alpha} Y^N (t) \|_{L^{ 2p}(\mathbb{T}^d)}^{2p}  \right]\notag\\
&\lesssim \int_{\mathbb{T}^d} \mathbb{E} \left[ \left| \langle \nabla \rangle^{\alpha} Y^N (t,x)   \right|^2   \right]^p dx.
\end{align}
Moreover, from an explicit calculation similar to \cite[Theorem 4.3]{CC18} for example,
\begin{align}
&\mathbb{E} \left[ \left| \langle \nabla \rangle^\alpha  Y^N (t,x)      \right|^2   \right] \notag\\
&\lesssim \sum_{\tau \in \mathfrak{S}_k} \sum_{l_i \in \mathbb{Z}^d_N} \langle l_1 + \cdots + l_k \rangle^{2\alpha} \frac{\langle l_1 + \cdots + l_k \rangle^{-\sigma}}{\sum_{i=1}^k \langle l_i \rangle^\sigma + \langle l_1 + \cdots + l_k \rangle^{\sigma}}\prod_{i=1}^k \langle l_i \rangle^{-\sigma + 2m + n_i + n_{\tau(i)}}. \label{sumY}
\end{align}
It follows that 
$\sup_N \mathbb{E}\left[ \|  Y^N(t)      \|_{W^{\alpha, 2p}(\mathbb{T}^d)}^{2p}  \right] < \infty$,
provided that 
\begin{equation}\label{eq:Lem3.3-01}
\sum_{i=1}^k \left\{  (\sigma - n_i - n_{\tau(i)} - 2m) \wedge d \right\} + \sigma > \left\{(k-1)d \right\}\vee (2\alpha + kd)
\end{equation}
for any $\tau \in \mathfrak{S}_k$. 
Note that the inequality~\eqref{eq:Lem3.3-01} indeed holds from our assumption.
In view of the embedding $W^{\alpha,p}(\mathbb{T}^d) \subset W^{\alpha - \frac{d}{p},\infty}(\mathbb{T}^d) \subset C^{\alpha- \frac{d}{p}}(\mathbb{T}^d)$, by taking sufficiently large $p$, the desired result follows.
The $\mathbb{P}$-almost sure convergence follows from a standard argument using the Borel--Cantelli lemma, see \cite[Proposition 2.7]{OOT24} or \cite[Proposition 4.3]{Nag24} for example.
\end{proof}

Next, we prove Lemma~\ref{anb}.

\begin{proof}[Proof of Lemma~\ref{anb}]
Similarly to the proof of Lemma~\ref{hima}, we get from Corollary~\ref{contraction}
\begin{align}
\mathbb{E}\left[ \|\Wick{\left( \langle \nabla \rangle^\alpha Z_N  \right)^J} (t) \|_{W^{\beta, 2p}(\mathbb{T}^d)}^{2p}  \right] &\lesssim \int_{\mathbb{T}^d} \mathbb{E} \left[ \left| \langle \nabla \rangle^{\beta}\Wick{\left( \langle \nabla \rangle^\alpha Z_N  \right)^J}   (t,x)   \right|^2   \right]^p dx \notag \\
&\lesssim \biggl(\sum_{l_i \in \mathbb{Z}^d_N} \langle l_1 + \cdots + l_{k+1} \rangle^{2\beta} \prod_{i=1}^J \langle l_i \rangle^{-\sigma + 2m + 2\alpha}     \biggr)^p \label{sum:Z:}\\
&\lesssim 1+ N^{\left\{J(2\alpha + 2m - \sigma) + (2\beta)\vee (-d)    \right\}p + Jdp}\;,\notag
\end{align}
which proves assertion~\ref{(i)}. When $J\left( \frac{\sigma -d}{2} - m -\alpha  \right) = - \frac{d}{2}$,  we similarly obtain
\[ \mathbb{E}\left[ \|\Wick{\left( \langle \nabla \rangle^\alpha Z_N  \right)^J} (t) \|_{W^{-\frac{d}{2}, 2p}(\mathbb{T}^d)}^{2p}  \right] \lesssim (\log N)^p\;,\]
which proves assertion~\ref{(ii)}.  
\end{proof}

Next, we prove Lemma~\ref{lpj}.

\begin{proof}[Proof of Lemma~\ref{lpj}]
We define $G(f) \coloneqq \langle \nabla \rangle^{-n_0} F(f) = \prod_{i=1}^k\langle \nabla \rangle^{n_i}f$.
From Lemma~\ref{bu}, we get
\begin{align*}
c_{N,2}^\alpha &= \mathbb{E} \left[\Wick{\left( \langle \nabla \rangle ^\alpha Z_N   \right)^{k}}\langle \nabla \rangle ^\alpha Y_N (t,x)  \right]\\
&= \langle \nabla \rangle_{x'}^{\alpha + n_0} \int^t_{-\infty} ds\ e^{(s-t) \langle \nabla \rangle_{x'}^\sigma} (P_N)_{x'} \mathbb{E} \left[\Wick{\left( \langle \nabla \rangle ^\alpha Z_N   \right)^{k}}(t,x)\Wick{G(Z_M)}(s,x')   \right]\bigg|_{x'=x} \\
&= k! \langle \nabla \rangle_{x'}^{\alpha + n_0} \int^t_{-\infty} ds\ e^{(s-t) \langle \nabla \rangle_{x'}^\sigma} (P_N)_{x'} \prod_{i=1}^k \mathbb{E} \left[ \langle \nabla \rangle ^\alpha Z_N  (t,x)\langle \nabla \rangle^{n_i}Z(s,x')   \right]\bigg|_{x'=x}\\
&= k! \langle \nabla \rangle_{x'}^{\alpha + n_0} \int^t_{-\infty} ds\ e^{(s-t) \langle \nabla \rangle_{x'}^\sigma} (P_N)_{x'} \prod_{i=1}^k  \sum_{l\in\mathbb{Z}^d,\ |l|\le N} \langle l \rangle^{\alpha + n_i + 2m -\sigma} e^{(s-t)\langle l \rangle^\sigma}e_l(x) e_{-l} (x')    \bigg|_{x'=x} \\
&\simeq \sum_{l_i \in \mathbb{Z}^d_N} \frac{ \langle l_1 + \cdots + l_{k} \rangle^{\alpha + n_0}}{\sum_{i=1}^k \langle l_i \rangle^\sigma + \langle l_1 + \cdots + l_k \rangle^\sigma} \prod_{i=1}^k \langle l_i \rangle^{\alpha + n_i + 2m -\sigma}.
\end{align*}
Note now that for every $M$ with $2^M \le N$, there are of the order of $2^{kdM}$ values of 
$l$ such that $|l_i| \sim 2^M$ for every $i$ and $|\sum_i l_i| \sim 2^M$. It follows that 
\begin{equ}
c_{N,2}^\alpha
\gtrsim \sum_{2^M \le N}2^{kdM} 2^{M(\alpha + n_0-\sigma)} \prod_{i=1}^k 2^{M(\alpha + n_i + 2m-\sigma)}
= \sum_{2^M \le N} 2^{\delta(\alpha) M} \;,
\end{equ}
which proves \eqref{e:boundc2}.

Next, we prove the second assertion of the lemma. 
We write 
\[ X_N^\alpha \coloneqq \bigl\langle\Wick{\left( \langle \nabla \rangle ^\alpha Z_N   \right)^{k}}\langle \nabla \rangle ^\alpha Y_N  - c_{N,2}^\alpha, 1   \bigr\rangle_{L^2(\mathbb{T}^d)}\;, \]
and note that it is easy to see that
\begin{align}
\bigl\langle \Wick{\left( \langle \nabla \rangle ^\alpha Z_N   \right)^{k}}, e_l       \bigr\rangle_{L^2(\mathbb{T}^d)} &= \sum_{l_i\,:\, l_1 + \cdots + l_k = l}\Wick{\prod_{i=1}^k \one_{|l_i|\le N} \langle l_i \rangle^\alpha \hat Z (t,l_i)}\;, \label{saisho}
\end{align}
(here and below the $l_i$ take values in all of $\Z^d$, not just $\Z^d_N$) and 
\begin{align}
&\left\langle  \langle \nabla \rangle^\alpha Y_N(t)  , e_l       \right\rangle_{L^2(\mathbb{T}^d)}\notag \\
&= \Bigl\langle  \langle \nabla \rangle^\alpha P_N\langle \nabla \rangle^{n_0}\int_{-\infty}^t ds\ e^{(s-t)(1-\Delta)^{\sigma/2}}  \Wick{\prod_{i=1}^k\langle \nabla \rangle^{n_i}Z (s) }        , e_l       \Bigr\rangle_{L^2(\mathbb{T}^d)} \notag \\
&= \one_{|l| \le N} \sum_{l_i\,:\, l_1 + \cdots + l_k = l} \langle l \rangle^\alpha\langle l \rangle^{n_0}\int_{-\infty}^t ds e^{(s-t)(1+|l|^2)^{\sigma / 2}}\Wick{ \prod_{i=1}^k\langle l_i \rangle^{n_i}\hat Z(s,l_i)} . \label{nibann}
\end{align}
From \eqref{saisho} and \eqref{nibann}, we get
\begin{align*}
 X_N^\alpha (t) &=\sum_{(l,l')\in L_{N,d}}   \scal{\Sigma l}^{\alpha+n_0}\int_{-\infty}^t ds\ e^{(s-t)\scal{\Sigma l}^{\sigma}} \\
&\quad \times \biggl(\Wick{ \prod_{i=1}^k \langle l_i \rangle^\alpha \hat Z (t,l_i)}\Wick{ \prod_{i=1}^k\langle l'_i \rangle^{n_i}\hat Z(s, l'_i)} - \mathbb{E} \biggl[\Wick{ \prod_{i=1}^k \langle l_i \rangle^\alpha \hat Z (t,l_i)}\Wick{ \prod_{i=1}^k\langle l'_i \rangle^{n_i}\hat Z(s, l'_i)} \biggr] \biggr)\;,
\end{align*}
where $\Sigma l := \sum _{i=1}^k l_i$ for $l \in (\Z^d)^k$ and we write $L_{N,d}$ for the set of all $(l,l') \in (\Z^d)^k\times (\Z^d)^k$
such that 
\begin{equ}
|l_i| \le N\;,\qquad \bigl|\Sigma l'\bigr| \le N\;,\qquad \Sigma l' = -\Sigma l\;.
\end{equ}
From Lemmas~\ref{haii}--\ref{saigo} and the fact that the $k$ copies of $\scal{\nabla}^\alpha Z(t)$ are exchangeable, we get
\begin{equs}
X_N^\alpha (t) & =\sum_{(l,l')\in L_{N,d}}   \scal{\Sigma l}^{\alpha+n_0}\int_{-\infty}^t ds\ e^{(s-t)\scal{\Sigma l}^{\sigma}} \label{e:expressionXs}\\
&\times \sum_{S \Subset \left\{ 1, \cdots, k \right\}}  \frac{k!}{|S|!} \prod_{i\in S} \mathbb{E} \left[\langle l_i \rangle^\alpha \hat Z (t,l_i)\langle l'_i \rangle^{n_i}\hat Z(s,l'_i)\right]\Wick{\prod_{i\in S^c} \Bigl(\langle l_i \rangle^\alpha \hat Z (t,l_i) \langle l'_i \rangle^{n_i}\hat Z(s,l'_i)\Bigr)    }\;,
\end{equs}
where $S \Subset \left\{ 1, \cdots, k \right\}$ means that $S \subset \left\{ 1, \cdots, k \right\}$
but $S \neq \left\{ 1, \cdots, k \right\}$. The reason why the latter is ruled out is that the 
projection of $ X_N^\alpha (t)$ onto the $0$th Wiener chaos is cancelled out by the constant $c_{N,2}^\alpha$.
Since for $s \le t$ one has
\begin{equ}
\mathbb{E} \left[\hat Z (t,p)\hat Z(s,p')\right] = \scal{p}^{2m -\sigma}  \one_{p = -p'} e^{(s-t)\scal{p}^\sigma}\;,
\end{equ}
and writing $ X_N^\alpha (t) = \sum_{S \Subset \left\{ 1, \cdots, k \right\}}X_N^{\alpha, S}$
with $X_N^{\alpha, S}$ the corresponding term in \eqref{e:expressionXs} belonging to the chaos 
of order $q = 2(k - |S|)$, we note that $X_N^{\alpha, S}(t)$ is of the form
\begin{equ}
X_N^{\alpha, S}(t) = \int_{\CX}\Wick{\prod_{i \in S^c} f_i(x)} \,\mu(dx)\;,
\end{equ}
for suitable Gaussian random variables $f_i(x)$ and (signed) measure space $(\CX,\mu)$.
As a consequence of Corollary~\ref{contraction}, we then have
\begin{equ}
\mathbb{E} \left[ \bigl| X_N^{\alpha,S} (t) \bigr|^2    \right] 
\le q! \int_{\CX}\int_{\CX} \prod_{i \in S^c} \E\bigl( f_i(x)f_i(y)\bigr)\, \mu(dx)\,\mu(dy)\;,
\end{equ}
which, when written out explicitly, yields
\begin{equs}
\mathbb{E} \left[ \bigl| X_N^{\alpha,S} (t) \bigr|^2    \right]
&\lesssim \sum_{(l,l')\in L_{N,d}} \sum_{(L,L')\in L_{N,d}} \int_{-\infty}^t \int_{-\infty}^t ds du  \langle \Sigma l \rangle^{\alpha + n_0} \langle \Sigma L' \rangle^{\alpha + n_0} e^{(s-t)\langle \Sigma l \rangle^{\sigma}  + (u-t)\langle \Sigma L' \rangle^{\sigma}} \notag\\
&\quad \times \prod_{i\in S}\Bigl(\langle l_i \rangle^{\alpha + n_i +2m -\sigma}  \one_{l_i = -l'_{i}} e^{(s-t)\langle l_i \rangle^\sigma}\langle L_i' \rangle^{\alpha + n_i +2m -\sigma}  \one_{L_i = -L'_{i}} e^{(u-t)\langle L_i' \rangle^\sigma}\Bigr) \\
&\quad \times \prod_{i\in S^c}\Bigl(\scal{l_i}^{2\alpha +2m -\sigma}  \one_{l_i = -L_{i}} 
\langle L_i' \rangle^{2n_i +2m -\sigma}  \one_{l_i' = -L'_{i}} e^{-|u-s|\scal{L_i'}^\sigma} \Bigr)\;.  \label{qxn2} 
\end{equs}
Note first that the indicator functions completely determine $l'$ and $L$ once $l$ and $L'$ are given.
Furthermore, the constraints $\Sigma l' = -\Sigma l$ and $\Sigma L' = -\Sigma L$ enforce
the constraint $\sum_{i \in S^c} (l_i - L'_i) = 0$. A natural domain of summation is therefore
given by the set $L_{N,S}$ of pairs $(l,L)$ such that 
\begin{equ}
|l_i| \le N\;,\quad |L_j| \le N \,(\forall j \in S)\;,\quad |\Sigma L| \le N\;,\qquad \sum_{i \in S^c} (l_i - L_i) = 0\;.
\end{equ} 

Since $\int_{-\infty}^t \int_{-\infty}^t e^{-a(t-s)-b|u-s|-c(t-u)}\,ds\,du \lesssim a^{-\frac{2-\gamma}2}c^{-\frac{2-\gamma}2}b^{-\gamma}$ uniformly over $a,b,c > 0$ and $\gamma \in [0,1]$, we obtain for any $\gamma \in [0,1]$
\begin{equs}
\mathbb{E} \bigl[ \bigl| X_N^{\alpha,S} (t) \bigr|^2    \bigr] 
&\lesssim \sum_{(l,L)\in L_{N,S}}
 \langle \Sigma l \rangle^{\alpha + n_0} \langle \Sigma L \rangle^{\alpha + n_0}
 \prod_{i=1}^k\Bigl(\langle l_i \rangle^{\eta_i}  \langle L_i \rangle^{\beta_i} \Bigr) \\
 &\times \Bigl(\scal{\Sigma l}^\sigma + \sum_{i\in S}\scal{l_i}^\sigma\Bigr)^{-\frac{2-\gamma}2}
 \Bigl(\scal{\Sigma L}^\sigma + \sum_{i\in S}\scal{L_i}^\sigma\Bigr)^{-\frac{2-\gamma}2}\Bigl(\sum_{i\in S^c}\scal{L_i}^\sigma\Bigr)^{-\gamma} \;,
\end{equs}
where we set
\begin{equ}
\eta_i = 
\left\{\begin{array}{cl}
	\alpha + n_i + 2m-\sigma & \text{if $i \in S$,} \\
	2\alpha + 2m-\sigma & \text{otherwise,}
\end{array}\right.\qquad
\beta_i = 
\left\{\begin{array}{cl}
	\alpha + n_i + 2m-\sigma & \text{if $i \in S$,} \\
	2n_i + 2m-\sigma & \text{otherwise.}
\end{array}\right.
\end{equ}
Choose now any collection of $\sigma_i \ge 0$ such that $\sum_{i=0}^k \sigma_i = \sigma$ (note that the 
index starts at $0$, not $1$!) and set $\gamma = \sum_{i\in S^c}\sigma_i/\sigma$ in the above. This 
yields the bound
\begin{equ}[e:goodBoundX]
\mathbb{E} \bigl[ \bigl| X_N^{\alpha,S} (t) \bigr|^2    \bigr] 
\lesssim \sum_{(l,L)\in L_{N,S}}
 \langle \Sigma l \rangle^{\alpha + n_0 - \frac{\sigma+\sigma_0}2} \langle \Sigma L \rangle^{\alpha + n_0 - \frac{\sigma+\sigma_0}2}
 \prod_{i=1}^k\Bigl(\langle l_i \rangle^{\bar \eta_i}  \langle L_i \rangle^{\bar \beta_i} \Bigr)\;,
\end{equ}
where
\begin{equ}
\bar \eta_i = 
\left\{\begin{array}{cl}
	\alpha + n_i + 2m-\sigma - \frac{\sigma_i}2 & \text{if $i \in S$,} \\
	2\alpha + 2m-\sigma & \text{otherwise,}
\end{array}\right.\qquad
\bar \beta_i = 
\left\{\begin{array}{cl}
	\alpha + n_i + 2m-\sigma - \frac{\sigma_i}2 & \text{if $i \in S$,} \\
	2n_i + 2m-\sigma -\sigma_i& \text{otherwise.}
\end{array}\right.
\end{equ}

As a consequence of Assumption~\ref{w43}, we have
$2n_i +2m -\sigma > -d$ for all $i$ while, by Assumption~\ref{ass:subcritical}, we have 
$\sum_{i=1}^k (2n_i +2m -\sigma + d) = - 2A < d + \sigma$. We can therefore choose the
exponents $\sigma_i$ in such a way that
\begin{equ}[e:boundbetai]
2n_i + 2m-\sigma -\sigma_i > -d \quad (\forall i \ge 1)\;,\qquad \sum_{i=1}^k (2n_i + 2m-\sigma -\sigma_i+d) < d\;.
\end{equ}
The existence of such $\sigma _i$ is obtained by checking two cases: if $\sum_{i=1}^k (2n_i +2m -\sigma + d) \geq \sigma$ holds or not.
In particular, this implies that one also
has $\sum_{i \in S^c} (\bar \beta_i + d) < d$.

We are now precisely in the setup of Lemmas~\ref{lem:loop} and~\ref{lem:boundDivergent} 
with $\ell = 4$, as well as $k_1 = k_2 = |S^c|$ and $k_3 = k_4 = 1 + |S|$. 
Note that $(l,L) \in L_{N,S}$ implies that one does indeed have the identities
\begin{equ}
\sum_{i \in S^c} l_i = \sum_{i \in S^c} L_i = \Sigma l - \sum_{i \in S} l_i= \Sigma L - \sum_{i \in S} L_i\;.
\end{equ}
Furthermore, for any fixed enumerations
$o \colon |S| \to S$ and $\bar o \colon |S^c| \to S^c$, the exponents  $\alpha_{ij}$ are given by 
\begin{equs}[2]
\alpha_{1j} &= \bar \beta_{\bar o(j)}\;,&\qquad \alpha_{2j} &= \bar \eta_{\bar o(j)}\;,\quad  \quad j \le |S^c|\;,\\
\alpha_{4j} &= \bar \beta_{o(j)}\;,&\qquad \alpha_{3j} &= \bar \eta_{o(j)}\;,\quad  \quad j \le |S|\;,
\end{equs}
as well as
\begin{equ}[e:extravalues]
\alpha_{3,(|S|+1)} = \alpha_{4,(|S|+1)} = \alpha + n_0 - \frac{\sigma+\sigma_0}2\;.
\end{equ}
In particular, we have
\begin{equs}
\alpha_1 &= \sum_{i \in S^c} \bigl(2n_i +2m-\sigma - \sigma_i + d\bigr) - d\;,\\
\alpha_2 &= \sum_{i \in S^c} \bigl(2\alpha +2m-\sigma + d\bigr) - d\;,\\
\alpha_3 &= \alpha_4 = \alpha + n_0 -\frac{\sigma+\sigma_0}2 + \sum_{i \in S} \bigl(\alpha + n_i +2m-\sigma - \frac{\sigma_i}2+ d\bigr)\;,
\end{equs}
where the $\alpha_i$ are defined in Lemma~\ref{lem:loop}.

We first consider the case $\delta(\alpha) > 0$. In this case we infer from \eqref{e:boundbetai} that
the condition \eqref{e:assalphaij} holds for $i=1$ so that, 
for $\alpha$ sufficiently large, the assumptions of Lemma~\ref{lem:boundDivergent} are satisfied  
with $\bar \ell = 1$. This yields the bound 
\begin{equ}
\mathbb{E} \bigl[ \bigl| X_N^{\alpha,S} (t) \bigr|^2    \bigr] 
\lesssim N^{2\zeta}\;,
\end{equ}
with
\begin{equ}
\zeta = \alpha + n_0 - \sigma - \frac d2 + \sum_{i \le k} \bigl(\alpha + n_i + 2m-\sigma+d\bigr)
= \delta(\alpha) - \frac d2\;,
\end{equ}
so that the claim \eqref{e:boundZY} follows.

We now turn to the borderline case where $A = n_0-m- \frac\sigma2$ and $\delta(\alpha) = 0$, so that 
$\alpha = \alpha_0$.
Inserting this expression for $A$ into the definition \eqref{e:defalpha0} of
$\alpha_0$, we find that
\begin{equ}[e:expralpha0]
2\alpha_0 + 2m - \sigma =  \frac{d}{k+1} - d \;,
\end{equ}
so that, combining this again with a choice of $\sigma_i$ satisfying \eqref{e:boundbetai},
we find that  $\bar \eta_j > -d$ and $\bar \beta_j > -d$ for all
$j \in \{1,\ldots,k\}$. Inserting \eqref{e:expralpha0} into the expression for $\alpha_2$, we also find
that $\alpha_2 \le \frac{k}{k+1}d-d < 0$, so that  
assumption \eqref{e:assalphaij} of Lemma~\ref{lem:loop} is 
satisfied for $i=2$.

Combining the second inequality in \eqref{e:boundbetai} with the definition of $A$ in \eqref{e:defA},
we find that 
\begin{equ}[e:inequA]
2A + \sum_{i=1}^k \sigma_i > -d\;.
\end{equ}
Furthermore, using again \eqref{e:expralpha0}, we have
\begin{equs}
\alpha_{4,(|S|+1)} &= \alpha_0 + n_0 - \frac{\sigma+\sigma_0}2
= n_0 - m -\frac{d+\sigma_0}2 + \frac{d}{2(k+1)} \\
&= \frac12 \Bigl(2A + \sum_{i=1}^k \sigma_i + \frac{d}{k+1} - d\Bigr)
> -d + \frac{d}{2(k+1)}\;,
\end{equs}
where we made use of \eqref{e:inequA} in the last step. In particular, we have $\alpha_{ij} > -d$ 
for all $(i,j)$.

Since we know that $\sum \alpha_i = 2(\delta(\alpha_0)-d) = -2d < -d$, it remains to show that 
$\alpha_3 = \alpha_4 < 0$ in order to be able to apply Lemma~\ref{lem:loop}.   
Since $\alpha_0 + n_i +2m-\sigma - \frac{\sigma_i}2+ d > 0$ for every $i$ by 
\eqref{e:boundbetai} and the first inequality of \eqref{e:boundbetai}, we conclude that
\begin{equs}
\alpha_3 &< \alpha_0 + n_0 -\frac{\sigma+\sigma_0}2
+ \sum_{i =1}^k \bigl(\alpha_0 + n_i +2m-\sigma - \frac{\sigma_i}2+ d\bigr)\\
&=\alpha_0 + n_0 -\sigma
+ \sum_{i =1}^k \bigl(\alpha_0 + n_i +2m-\sigma+ d\bigr) = \delta(\alpha_0) = 0\;,
\end{equs}
as required.
\end{proof}

\appendix

\section{On the Besov spaces}\label{kpp}

We define the Besov spaces as follows. Let $\chi, \rho \in C_c ^\infty (\mathbb{R}^d)$ be $\mathbb{R}_+$-valued functions such that
\begin{enumerate}
\item
$\mathrm{supp}(\chi) \subset{B(4)},\  \mathrm{supp}(\rho) \subset{B(4)\backslash B(1)}$,
\item
$\chi (x) + \sum_{i=0}^\infty \rho (2^{-i}x) = 1$ for any $x \in \mathbb{R}^d$.
\end{enumerate}
Such a pair $(\chi,\rho)$ indeed exists, see \cite[Section 2.2]{BCD11}.
\ Then, writing
\[ \rho_{-1} = \chi, \ \rho_{j} = \rho(2^{-j}\cdot) \ \ \ \mathrm{for} \ j \ge 0, \]
we define the Littlewood-Paley blocks  by
\[ \Delta_m f(x) \coloneqq \frac{1}{(2\pi)^{\frac{d}{2}}} \sum_{l \in \mathbb{Z}^d} \rho_m(l) \hat{f}(l) e^{\sqrt{-1}l\cdot x} \ \ \mathrm{for} \ f \in \mathcal{D}'(\mathbb{T}^d).\]
The Besov spaces $B^\alpha_{p,q}(\mathbb{T}^d) $ for $1 \le p,q \le \infty, \alpha \in \mathbb{R}$ are then defined as the space of all $f \in \mathcal{D}'(\mathbb{T}^d)$ with the finite Besov norm
\[ \| f \|_{B^\alpha_{p,q} (\mathbb{T}^d)} \coloneqq \| 2^{m\alpha} \Delta_m f \|_{l_m^q(L^p (\mathbb{T}^d))} < \infty .\]
When $p=q=\infty$, we write $\mathcal{C}^\alpha (\mathbb{T}^d) \coloneqq B^\alpha_{\infty,\infty}(\mathbb{T}^d)$.
The proofs of the following two lemmas can be found in \cite[Lemma 2.1]{GIP15} and \cite[Lemma A.5]{GIP15}, respectively, for example.
\begin{lemm}[Product estimates]\label{pro}
For $\alpha, \beta \in \mathbb{R}$ with $\alpha + \beta >0$, 
\[ \|fg\|_{\mathcal{C}^{\alpha \wedge \beta}(\mathbb{T}^d)} \lesssim \|f\|_{\mathcal{C}^\alpha(\mathbb{T}^d)} \|g\|_{\mathcal{C}^\beta(\mathbb{T}^d)}. \]  
\end{lemm}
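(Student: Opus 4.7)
The plan is a standard Bony paraproduct decomposition. Write
\begin{equation*}
fg = \Pi_\prec(f,g) + \Pi_\circ(f,g) + \Pi_\prec(g,f),
\end{equation*}
where $\Pi_\prec(f,g) \coloneqq \sum_{i \leq j-2} \Delta_i f\, \Delta_j g$ is the low--high paraproduct and $\Pi_\circ(f,g) \coloneqq \sum_{|i-j|\leq 1} \Delta_i f\, \Delta_j g$ is the resonant term, and estimate each piece separately in a suitable H\"older--Besov norm before combining.

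For the paraproduct $\Pi_\prec(f,g)$, the frequency support of $S_{j-2}f \cdot \Delta_j g$ lies in an annulus of radius $\sim 2^j$, so only Littlewood--Paley blocks $\Delta_k$ with $k \sim j$ can detect it. Combined with the standard bound $\|S_{j-2} f\|_{L^\infty} \lesssim 2^{-j(\alpha \wedge 0)} \|f\|_{\mathcal{C}^\alpha}$ (trivial when $\alpha \geq 0$, a convergent geometric series when $\alpha < 0$), this yields
\begin{equation*}
\|\Pi_\prec(f,g)\|_{\mathcal{C}^{\beta + (\alpha\wedge 0)}} \lesssim \|f\|_{\mathcal{C}^\alpha}\|g\|_{\mathcal{C}^\beta},
\end{equation*}
with no condition on the sign of $\alpha + \beta$. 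For the resonant term, each summand $\Delta_i f\, \Delta_j g$ with $|i-j|\leq 1$ has Fourier support in a ball of radius $\sim 2^i$, so $\Delta_k \Pi_\circ(f,g)$ only picks up contributions with $i \geq k - C$. Bounding $\|\Delta_i f\, \Delta_j g\|_{L^\infty} \lesssim 2^{-i(\alpha+\beta)} \|f\|_{\mathcal{C}^\alpha}\|g\|_{\mathcal{C}^\beta}$ and summing in $i$ gives
\begin{equation*}
\|\Delta_k \Pi_\circ(f,g)\|_{L^\infty} \lesssim \|f\|_{\mathcal{C}^\alpha}\|g\|_{\mathcal{C}^\beta}\sum_{i \geq k - C} 2^{-i(\alpha+\beta)} \lesssim 2^{-k(\alpha+\beta)} \|f\|_{\mathcal{C}^\alpha}\|g\|_{\mathcal{C}^\beta},
\end{equation*}
where the geometric sum converges precisely when $\alpha+\beta > 0$, producing $\Pi_\circ(f,g) \in \mathcal{C}^{\alpha+\beta}$.

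To conclude, the hypothesis $\alpha + \beta > 0$ forces $\alpha \vee \beta > 0$, and a brief case check shows that each of $\beta + (\alpha \wedge 0)$, $\alpha + (\beta \wedge 0)$ and $\alpha + \beta$ dominates $\alpha \wedge \beta$. Using the embedding $\mathcal{C}^{s'} \hookrightarrow \mathcal{C}^{s}$ for $s' \geq s$, all three pieces of the decomposition lie in $\mathcal{C}^{\alpha \wedge \beta}$ with the claimed bound. The main obstacle is the resonant term $\Pi_\circ(f,g)$: without $\alpha + \beta > 0$ the geometric sum above diverges and $\Pi_\circ(f,g)$ need not even make sense as a distribution, so this hypothesis is essentially sharp. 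The paraproduct pieces, by contrast, are always well-defined irrespective of the signs of $\alpha$ and $\beta$.
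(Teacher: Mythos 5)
Your Bony-decomposition argument is precisely the standard proof of this estimate; the paper does not reprove the lemma but refers to \cite[Lemma 2.1]{GIP15}, whose proof proceeds by the same paraproduct/resonant splitting with the same block-by-block bounds, so your route and the paper's coincide. One small caveat: in the borderline case $\alpha=0$ (then necessarily $\beta>0$) one has $\mathcal{C}^0\not\hookrightarrow L^\infty$, so the claimed bound $\|S_{j-2}f\|_{L^\infty}\lesssim 2^{-j(\alpha\wedge 0)}\|f\|_{\mathcal{C}^\alpha}$ acquires a factor $j$; this logarithmic loss is harmless because it is absorbed by the strictly positive exponent $\beta$ in the paraproduct estimate, so the stated conclusion is unaffected.
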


\begin{lemm}\label{kg}
Let $\alpha \in \mathbb{R}$ and $\gamma \ge 0$. Then, there exists some $C>0$ such that 
\[\|  N^{-\gamma} P_N f \|_{\mathcal{C}^{\alpha + \gamma} (\mathbb{T}^d)} \le C   \| f \|_{\mathcal{C}^{\alpha} (\mathbb{T}^d)}\]
for any $f\in \mathcal{C}^\alpha(\mathbb{T}^d)$ and $N \in \mathbb{N}$.
\end{lemm}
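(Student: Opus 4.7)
The plan is to reduce to a uniform Littlewood--Paley block estimate by exploiting that $\Delta_m$ and $P_N$ are both Fourier multipliers. Using the characterisation $\|g\|_{\mathcal{C}^\beta(\mathbb{T}^d)} = \sup_{m\ge -1}2^{m\beta}\|\Delta_m g\|_{L^\infty}$, it suffices to establish the uniform bound
\[\sup_{m\ge -1}\, N^{-\gamma}\, 2^{m(\alpha+\gamma)}\|\Delta_m P_N f\|_{L^\infty} \le C\|f\|_{\mathcal{C}^\alpha}.\]
Since $\Delta_m$ has symbol $\rho_m$ supported in the dyadic annulus $\{2^m\le|l|\le 2^{m+2}\}$ (with the analogous small-ball support for $m=-1$) and $P_N$ has symbol $\mathbf{1}_{|l|\le N}$, the two operators commute and their composition $\Delta_m P_N$ is the Fourier multiplier with symbol $\rho_m(l)\mathbf{1}_{|l|\le N}$. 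The analysis then splits naturally according to the size of $2^m$ relative to $N$.

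The first observation is that $\Delta_m P_N f = 0$ whenever $2^m>N$, since the supports are disjoint; this is what allows the prefactor $N^{-\gamma}$ to control $2^{m\gamma}$ in the supremum. The second observation is that when $2^{m+2}\le N$ the cutoff acts as the identity on $\operatorname{supp}\rho_m$, so $\Delta_m P_N f = \Delta_m f$, and in this ``bulk'' regime
\[N^{-\gamma}\, 2^{m(\alpha+\gamma)}\|\Delta_m P_N f\|_{L^\infty} = (2^m/N)^\gamma \cdot 2^{m\alpha}\|\Delta_m f\|_{L^\infty}\le \|f\|_{\mathcal{C}^\alpha},\]
using $\gamma\ge 0$ and $2^m\le N$. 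The only remaining range is the transition $N/4<2^m\le N$, which contains only $O(1)$ values of $m$; in this zone I need the uniform bound $\|\Delta_m P_N f\|_{L^\infty}\lesssim 2^{-m\alpha}\|f\|_{\mathcal{C}^\alpha}$, and then $2^{m(\alpha+\gamma)}N^{-\gamma}\lesssim 2^{m\alpha}$ closes the estimate.

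For the transition I would use the identity $\rho_m = \rho_m\tilde\rho_m$ with $\tilde\rho_m \coloneqq \rho_{m-1}+\rho_m+\rho_{m+1}$ to factor $\Delta_m P_N f = \Delta_m P_N \tilde\Delta_m f$, reducing the task to the uniform operator inequality
\[\|\Delta_m P_N h\|_{L^\infty(\mathbb{T}^d)} \lesssim \|h\|_{L^\infty(\mathbb{T}^d)}\]
for $h$ with spectral support in the slightly larger annulus $\operatorname{supp}\tilde\rho_m$, applied to $h = \tilde\Delta_m f$ which satisfies $\|\tilde\Delta_m f\|_{L^\infty}\lesssim 2^{-m\alpha}\|f\|_{\mathcal{C}^\alpha}$ directly from the Besov definition.

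The main obstacle is this last operator bound when $2^m\sim N$: the cutoff $\mathbf{1}_{|l|\le N}$ is genuinely rough on the sphere $|l|=N$, which sits inside the narrow supporting annulus of $\rho_m$. Following \cite[Lemma~A.5]{GIP15}, I would handle it by splitting $\mathbf{1}_{|l|\le N} = \phi(|l|/N) + r_N(l)$ where $\phi\in C_c^\infty(\mathbb{R})$ equals $1$ on $[0,1/2]$ and vanishes on $[1,\infty)$; rescaling $\xi = 2^m\eta$ turns the smooth part $\rho_m(l)\phi(|l|/N)$ into a fixed-scale multiplier whose kernel, being the Fourier transform of a Schwartz function supported in a compact annulus of the rescaled frequency space, has uniformly bounded $L^1$-norm on $\mathbb{T}^d$. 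The remainder $\rho_m(l)r_N(l)$ is supported in the thin spherical shell $\{N/2\le|l|\le N\}\cap\operatorname{supp}\rho_m$ and its contribution, when acting on functions of matching spectral scale, is controlled by a careful analysis exploiting the smoothness of $\rho$ in the radial direction. Carrying out this split so that the sharp boundary contribution is absorbed into the smooth-envelope estimate is the technical heart of the argument.
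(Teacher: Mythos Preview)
The paper does not prove this lemma itself; it simply cites \cite[Lemma~A.5]{GIP15}. Your three-regime decomposition ($2^m>N$, $2^{m+2}\le N$, and the transition $2^m\sim N$) is the standard route, and your treatment of the first two regimes is correct.

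The gap sits in the transition step, and it is not a matter of bookkeeping. The paper's $P_N$ is the \emph{sharp} cutoff $\mathbf{1}_{|l|\le N}$, and the uniform bound $\|\Delta_m P_N\|_{L^\infty\to L^\infty}\lesssim 1$ you are aiming for does not hold. Already in $d=1$ with $2^m<N<2^{m+2}$, the symbol $\rho_m(l)\mathbf{1}_{|l|\le N}$ jumps by $\rho(N/2^m)\sim 1$ at $|l|=N$; one Abel summation shows the convolution kernel satisfies $|K(x)|\simeq\min(2^m,|x|^{-1})$, hence $\|K\|_{L^1(\mathbb{T})}\sim\log N$, and for $d\ge2$ the sharp ball multiplier is worse still. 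Your split $\mathbf{1}_{|l|\le N}=\phi(|l|/N)+r_N$ does not repair this: the remainder $\rho_m r_N$ still carries the unit-size jump on $\{|l|=N\}$, and radial smoothness of $\rho$ cannot soften a sharp spherical cut. The cited result in \cite{GIP15} concerns smooth Littlewood--Paley cutoffs, for which your argument goes through verbatim; with the sharp $P_N$ used here one only obtains $\|N^{-\gamma}P_N f\|_{\mathcal{C}^{\alpha+\gamma-\epsilon}}\lesssim_\epsilon\|f\|_{\mathcal{C}^\alpha}$ for every $\epsilon>0$, which is in any case all that Lemma~\ref{vv} and the proof of Theorem~\ref{new} require.
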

Combining the above two lemmas, we get the following lemma.

\begin{lemm}\label{vv}
Let $\gamma>0$, $\alpha_1, \cdots,  \alpha_K <0$ and $f_1 \in \mathcal{C}^{\alpha_1}(\mathbb{T}^d), \cdots, f_K \in \mathcal{C}^{\alpha_K}(\mathbb{T}^d)$. We assume that $\alpha \coloneqq \sum_{i=1}^{K}\alpha_i + \gamma >0$.
Then, there holds that 
\begin{align*}
\left\| N^{-\gamma} \prod_{k=1}^K P_N  f_k  \right\|_{\mathcal{C}^{\alpha} (\mathbb{T}^d)} \rightarrow 0
\end{align*}
as $N\rightarrow\infty$.
\end{lemm}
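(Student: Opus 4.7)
The plan is to distribute the prefactor $N^{-\gamma}$ across the $K$ smoothed factors $P_N f_k$, then combine Lemma~\ref{kg} with an iterated application of Lemma~\ref{pro}. The hypothesis $\alpha = \sum_k \alpha_k + \gamma > 0$ rewrites as $\sum_k(-\alpha_k) < \gamma$, so I can fix some slack $\delta \in (0,\alpha)$ and choose positive exponents $\gamma_k > -\alpha_k$ satisfying $\sum_k \gamma_k = \gamma - \delta$. Lemma~\ref{kg} then gives
\[
  \bigl\|N^{-\gamma_k} P_N f_k\bigr\|_{\mathcal{C}^{\alpha_k+\gamma_k}(\mathbb{T}^d)} \le C \|f_k\|_{\mathcal{C}^{\alpha_k}(\mathbb{T}^d)},
\]
and since every $\alpha_k + \gamma_k > 0$, the pair-sum condition in Lemma~\ref{pro} is satisfied at each step of the iteration. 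Consequently $\prod_k N^{-\gamma_k} P_N f_k$ is uniformly bounded in $\mathcal{C}^{\min_k(\alpha_k+\gamma_k)}(\mathbb{T}^d)$, and absorbing the remaining $N^{-\delta}$ yields decay to $0$ at rate $N^{-\delta}$ in this H\"older--Besov space.

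To promote this to the stated $\mathcal{C}^\alpha$-convergence I would exploit that $\prod_k P_N f_k$ is band-limited at frequency $KN$. Either via a standard Bernstein inequality, or by applying Lemma~\ref{kg} with $P_{KN}$ to the product viewed as $P_{KN}$ of itself, one can trade regularity for a controlled power of $N$: for band-limited $g$ at scale $KN$ and $\beta \le \alpha$ one has $\|g\|_{\mathcal{C}^\alpha(\mathbb{T}^d)} \lesssim N^{\alpha-\beta}\|g\|_{\mathcal{C}^\beta(\mathbb{T}^d)}$. Choosing the $\gamma_k$ and the slack $\delta$ judiciously should then let the $N^{-\delta}$ decay dominate the Bernstein loss, producing the advertised vanishing in $\mathcal{C}^\alpha$.

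The hard part is precisely this balance. Iterating Lemma~\ref{pro} is lossy because it only returns the minimum of the factor regularities, and the constraint $\sum_k(\alpha_k + \gamma_k) = \alpha - \delta$ forces $\min_k(\alpha_k+\gamma_k) \le (\alpha-\delta)/K$, which is strictly below $\alpha$ as soon as $K \ge 2$. A blunt Bernstein upgrade therefore introduces a positive power of $N$ that cannot be absorbed by $N^{-\delta}$. Closing the argument requires a finer analysis of how regularity is redistributed among the $K$ factors, rather than routing everything through a single intermediate Besov space; this is where one must exploit the multilinear structure of the product together with the band-limited character of each $P_N f_k$ to recover the full $\mathcal{C}^\alpha$ decay.
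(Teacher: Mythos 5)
Your first paragraph is essentially the paper's entire proof: the paper offers nothing beyond the sentence ``Combining the above two lemmas'', i.e.\ split $N^{-\gamma}=N^{-\delta}\prod_k N^{-\gamma_k}$ with $\gamma_k>-\alpha_k$, apply Lemma~\ref{kg} to each factor and then iterate Lemma~\ref{pro}. As you correctly observe, this only yields that $N^{-\gamma}\prod_k P_N f_k\to 0$ in $\mathcal{C}^{\beta}(\mathbb{T}^d)$ for $\beta\le\min_k(\alpha_k+\gamma_k)\le(\alpha-\delta)/K$, which is strictly below the advertised exponent $\alpha$ as soon as $K\ge 2$. Up to that point your argument and the paper's coincide.

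The obstruction you flag in your last paragraph is not a defect of your bookkeeping: the endpoint statement is in fact false in general, so no redistribution of exponents or Bernstein upgrade can close the gap. Take $d=1$, $K=2$, $\gamma=1$, $\alpha_1=\alpha_2=-\tfrac14$ (so $\alpha=\tfrac12$) and $f_1=f_2=f\coloneqq\sum_{l}\langle l\rangle^{-3/4}e_l\in\mathcal{C}^{-1/4}(\mathbb{T})$. All Fourier coefficients of $(P_Nf)^2$ are nonnegative, and for $|k|\le N/8$ one checks $\widehat{(P_Nf)^2}(k)\gtrsim\langle k\rangle^{-1/2}$; evaluating the Littlewood--Paley blocks at $x=0$ (where all contributions add constructively) gives $\|\Delta_j (P_Nf)^2\|_{L^\infty}\gtrsim 2^{j/2}$ for $2^j\lesssim N$, hence $\|(P_Nf)^2\|_{\mathcal{C}^{1/2}}\gtrsim N$ and $N^{-\gamma}\|(P_Nf)^2\|_{\mathcal{C}^{1/2}}$ stays of order one rather than tending to $0$. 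So the correct conclusion is the weaker one you already established: smallness holds in $\mathcal{C}^{\beta}$ for suitable $\beta>0$ (at the exponent $\alpha$ itself one can only expect boundedness, with decay after sacrificing an arbitrarily small amount of regularity), and in particular convergence to $0$ in $\mathcal{D}'$. This is also all the paper ever uses: Lemma~\ref{vv} enters the proof of Theorem~\ref{new} only to show that the third term on the right-hand side of \eqref{cx} vanishes in $\mathcal{D}'$. In short, your proof of the weakened statement is complete and matches the intended argument; the remaining ``hard part'' you describe is not a missing idea on your side but an overstatement in the lemma as formulated.
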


\section{On the Gaussian polynomials}\label{piyo}
We recall some basic facts on the Gaussian random variables, see \cite{Jan97} for more details, for example.
Let $H$ be a Gaussian space, namely a closed linear subspace of $L^2(\Omega, \P)$ whose elements all are centred, real-valued, Gaussian random variables.
The $n$th homogeneous Wiener chaos $H^{:n:} \subset L^2(\Omega, \P)$ is defined in the following way. We first define
$H^n$ as the closure in $L^2(\Omega,\mathbb{P})$ of the linear span of all products  
$f_1\cdots f_n$ with $f_i \in H \oplus \R$, so that in particular $H^{n-1} \subset H^{n}$,
and then define $H^{:n:}$ as the orthogonal complement of $H^{n-1}$ in $H^n$. (We set $H^{:0:} = H^0 = \R$.)
Letting $\mathcal{F}^H$ be the $\sigma$-algebra generated by $H$, it is known that
\[ L^2 (\Omega, \mathcal{F}^H, \mathbb{P}) = \bigoplus_{n=0}^\infty H^{:n:}.      \]
We denote by $\pi_n$ the orthogonal projection onto $H^{:n:}$.
For random variables $f_1, \cdots, f_k \in H$, we define their Wick product $\Wick{f_1 \cdots f_k}$ by 
\[\Wick{f_1 \cdots f_k} \coloneqq \pi_k (f_1 \cdots f_k) .\]

Regarding the Wick product, the following facts are known.
For $f\in H$ and $k\in\mathbb{N}_0$, one has
\begin{equation}  
\Wick{f^k} = H_k \left(f  ; \mathbb{E} [f^2]\right), 
\end{equation}
and more generally, for $f = (f_1, \cdots, f_k) \in H^k$ and $\beta \in \mathbb{N}_0^k$,
\begin{equation}\label{koio}
\Wick{f^\beta} = H_\beta \left(f; (\mathbb{E}[ f_i f_j])_{1\le i,j\le k} \right) ,     
 \end{equation}
 see Section~\ref{sec:notations} for the definition of $H_k$ and $H_\beta$.
The following lemmas are also known. The proofs can be found in \cite{Jan97}, for example.

\begin{lemm}\label{haii}
Let $f_1, \cdots, f_k, g_1, \cdots, g_k \in H$. Then, there holds
\begin{equ}[e:iso]
\mathbb{E} \left[\Wick{f_1 \cdots f_k}\Wick{g_1 \cdots g_k}   \right] = \sum_{\tau\in \mathfrak{S}_k}\prod_{i=1}^k \mathbb{E} \left[  f_i  g_{\tau(i)}  \right],
\end{equ}
where $\mathfrak{S}_k$ is the symmetric group of degree $k$. Especially, for $f, g \in H$, there holds
\begin{equ}[e:iso2]
\mathbb{E} \left[\Wick{f^{k}}\Wick{ g^{k}}   \right] = k! \mathbb{E} [fg]^k.
\end{equ}
\end{lemm}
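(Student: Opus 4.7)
The plan is to prove the identity via the generating-function characterisation of Wick products together with the classical moment-generating formula $\E[e^X] = e^{\frac12 \Var X}$ for a centred Gaussian $X$.

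First I would package $\boldsymbol{f} = (f_1,\ldots,f_k)$ and $\boldsymbol{g} = (g_1,\ldots,g_k)$ into vectors with covariance matrices $C^{ff}_{ij} = \E[f_i f_j]$, $C^{gg}_{ij} = \E[g_i g_j]$ and cross-covariance $E_{ij} = \E[f_i g_j]$. By the defining identity \eqref{polyw} combined with \eqref{koio}, for $\boldsymbol{t},\boldsymbol{s}\in\R^k$ one has
\begin{equation*}
\exp\Bigl(\sum_i t_i f_i - \tfrac12 \sum_{ij} C^{ff}_{ij} t_i t_j\Bigr) = \sum_{\beta\in\N_0^k} \frac{\boldsymbol{t}^\beta}{\beta!}\Wick{\boldsymbol{f}^\beta},
\end{equation*}
and the analogous identity holds for $\boldsymbol{g}$ with $\boldsymbol{s}$.

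Next I would take expectations of the product of the two generating functions. Since $\sum_i t_i f_i + \sum_j s_j g_j$ is a centred Gaussian of variance $\sum C^{ff}_{ij}t_it_j + \sum C^{gg}_{ij}s_is_j + 2\sum E_{ij}t_is_j$, the Gaussian moment formula gives
\begin{equation*}
\E\Bigl[\exp\Bigl(\sum_i t_i f_i + \sum_j s_j g_j - \tfrac12\sum C^{ff}_{ij}t_it_j - \tfrac12\sum C^{gg}_{ij}s_is_j\Bigr)\Bigr] = \exp\Bigl(\sum_{ij} E_{ij}\,t_i s_j\Bigr).
\end{equation*}
Expanding the left-hand side by the two generating-function identities and comparing the coefficient of the multilinear monomial $t_1\cdots t_k\, s_1\cdots s_k$ on both sides yields
\begin{equation*}
\E\bigl[\Wick{f_1\cdots f_k}\Wick{g_1\cdots g_k}\bigr] = \bigl[t_1\cdots t_k\, s_1\cdots s_k\bigr]\,\exp\Bigl(\sum_{ij} E_{ij}\,t_is_j\Bigr).
\end{equation*}
On the right, only the $k$-th order term $\tfrac{1}{k!}\bigl(\sum_{ij}E_{ij}t_is_j\bigr)^k$ contributes a monomial of multidegree $(1,\ldots,1)$ in both $\boldsymbol{t}$ and $\boldsymbol{s}$, and expanding that power produces exactly one term per bijection $\tau\in\mathfrak{S}_k$ of the $s$-indices to the $t$-indices. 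This gives precisely $\sum_{\tau\in\mathfrak{S}_k}\prod_i E_{i,\tau(i)} = \sum_{\tau\in\mathfrak{S}_k}\prod_i \E[f_i g_{\tau(i)}]$, which is \eqref{e:iso}. Specialising $f_i=f$ and $g_i=g$ makes every term in the sum equal to $\E[fg]^k$, and since $|\mathfrak{S}_k|=k!$ we obtain \eqref{e:iso2}.

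The only subtle point is the combinatorial bookkeeping: one must verify that exactly the degree-$k$ term of $\exp(\sum E_{ij}t_is_j)$ contributes to the multilinear coefficient, and that its expansion enumerates permutations without over- or under-counting. Apart from that, the argument is a direct consequence of the exponential/Gaussian structure of the Wick product and involves no real obstacle; one could alternatively proceed by induction on $k$ using the recursion $\Wick{f_1\cdots f_k f_{k+1}} = \Wick{f_1\cdots f_k}\cdot f_{k+1} - \sum_{i=1}^k \E[f_i f_{k+1}]\Wick{f_1\cdots \widehat{f_i}\cdots f_k}$, but the generating-function route is cleaner.
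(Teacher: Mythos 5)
Your argument is correct, and there is in fact no in-paper proof to compare it to: the paper quotes Lemma~\ref{haii} as a known fact and simply refers to \cite{Jan97}, where essentially your generating-function computation is carried out. The key steps all check out: by \eqref{polyw} and \eqref{koio} each Wick exponential $\exp\bigl(\sum_i t_i f_i - \tfrac12\sum_{ij}C^{ff}_{ij}t_it_j\bigr)$ has chaos expansion $\sum_\beta \frac{t^\beta}{\beta!}\Wick{f^\beta}$, the Gaussian identity $\E[e^X]=e^{\E[X^2]/2}$ for centred Gaussian $X$ collapses the expectation of the product to $\exp\bigl(\sum_{ij}E_{ij}t_is_j\bigr)$, and the coefficient of $t_1\cdots t_k s_1\cdots s_k$ there is indeed $\sum_{\tau\in\mathfrak{S}_k}\prod_i E_{i,\tau(i)}$ — note only that each bijection $\tau$ is hit by $k!$ ordered factorisations of $\bigl(\sum_{ij}E_{ij}t_is_j\bigr)^k$, which the prefactor $\frac1{k!}$ exactly cancels, so ``exactly one term per bijection'' is the net count rather than the raw one. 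Two points you should make explicit in a write-up: (i) interchanging the expectation with the double series requires justification, which is available because each Wick exponential series converges in $L^2(\Omega,\P)$, so products of partial sums converge in $L^1$ and expectations pass to the limit; (ii) identifying coefficients of the two expansions needs the observation that both sides are everywhere-convergent power series in $(t,s)$, or equivalently one applies $\partial_{t_1}\cdots\partial_{t_k}\partial_{s_1}\cdots\partial_{s_k}$ at the origin. With those remarks the proof is complete; the inductive route via the Wick recursion you mention at the end is the standard alternative, but your generating-function version is the cleaner one and matches the cited source.
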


\begin{cor}\label{contraction}
For any $n \ge 1$, the multilinear map $(f_1,\ldots,f_n) \mapsto \Wick{f_1 \cdots f_k}$
extends to a bounded linear map from $H^{\otimes n}$ to $L^2(\Omega,\P)$ with norm $\sqrt{n!}$.
\end{cor}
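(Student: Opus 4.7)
The plan is to pass from Lemma~\ref{haii}, which computes the inner product on decomposable tensors, to an honest bound on the symmetric tensor power of $H$ and then to all of $H^{\otimes n}$ by density. There is essentially no hard step; the only subtlety is keeping track of the symmetrization.

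First I would observe that the map $(f_1,\ldots,f_n)\mapsto \Wick{f_1\cdots f_n}$ is symmetric in its arguments (because the ordinary product $f_1\cdots f_n$ is) and $n$-linear, so it factors through the algebraic tensor product. Define $T\colon H^{\otimes n}\to L^2(\Omega,\P)$ on simple tensors by $T(f_1\otimes\cdots\otimes f_n) \coloneqq \Wick{f_1\cdots f_n}$ and extend by linearity.

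Next I would use \eqref{e:iso} to compute, for $f = \sum_j f_1^{(j)}\otimes\cdots\otimes f_n^{(j)}$ a general element of the algebraic tensor product,
\begin{equation*}
\|T(f)\|_{L^2}^2 = \sum_{j,j'} \sum_{\tau\in\mathfrak{S}_n}\prod_{i=1}^n \langle f_i^{(j)}, f_{\tau(i)}^{(j')}\rangle_H = n!\,\langle \hat f,\hat f\rangle_{H^{\otimes n}}\;,
\end{equation*}
where $\hat f \coloneqq \frac{1}{n!}\sum_{\tau\in\mathfrak{S}_n}\sum_j f_{\tau(1)}^{(j)}\otimes\cdots\otimes f_{\tau(n)}^{(j)}$ is the symmetrization of $f$; the identity $\sum_{\tau,\sigma}\prod_i\langle f_i,f_{\sigma\tau^{-1}(i)}\rangle = n!\sum_\pi\prod_i\langle f_i,f_{\pi(i)}\rangle$ reduces the double sum to the single sum over $\tau$.

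Since symmetrization is an orthogonal projection on $H^{\otimes n}$, one has $\|\hat f\|_{H^{\otimes n}}\le \|f\|_{H^{\otimes n}}$, giving
\begin{equation*}
\|T(f)\|_{L^2} \le \sqrt{n!}\,\|f\|_{H^{\otimes n}}\;.
\end{equation*}
Hence $T$ extends uniquely by density to a bounded linear map from all of $H^{\otimes n}$ into $L^2(\Omega,\P)$, with operator norm at most $\sqrt{n!}$. That the norm is exactly $\sqrt{n!}$ follows from taking $f = e^{\otimes n}$ for a unit vector $e\in H$: then $\hat f = e^{\otimes n}$ so $\|\hat f\|_{H^{\otimes n}} = 1$, and the computation above gives $\|T(e^{\otimes n})\|_{L^2}^2 = n!$, which saturates the bound. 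No step here is a genuine obstacle; the only thing to be careful about is the combinatorial identity that turns the sum over $\mathfrak{S}_n\times\mathfrak{S}_n$ into $n!$ times the symmetrized inner product.
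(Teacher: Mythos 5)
Your proof is correct and follows essentially the same route as the paper: both arguments factor the Wick map through the symmetrization (orthogonal projection) of $H^{\otimes n}$ and use \eqref{e:iso}\slash\eqref{e:iso2} to see that on the symmetric part the map is $\sqrt{n!}$ times an isometry. You simply spell out the combinatorial identity and the sharpness via $e^{\otimes n}$ that the paper leaves implicit.
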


\begin{proof}
By \eqref{e:iso} and \eqref{e:iso2} the map factors through $H^{\otimes n} \twoheadrightarrow H^{\otimes_s n} \hookrightarrow L^2(\Omega,\P)$
with the first map the orthogonal projection and the second map an isometry (times $\sqrt{n!}$).
\end{proof}

\begin{lemm}\label{saigo}
Let $f_1, \cdots, f_k, g_1, \cdots, g_k \in H$. Then, there holds
\[\Wick{f_1 \cdots f_k}\Wick{g_1 \cdots g_k} = \sum_{S, S' \subset \left\{ 1, \cdots, k \right\}} \mathbb{E} \left[\Wick{\prod_{i\in S}f_i}\Wick{\prod_{i\in S'}g_i}\right] \Wick{\prod_{i\in S^c}f_i \prod_{i\in S'^c}g_i    }.    \]
In particular, $\Wick{f_1 \cdots f_k}\Wick{g_1 \cdots g_k} \in \bigoplus_{r=0}^k H^{:2r:}$ and
\[ \pi_{2r} \left(\Wick{f_1 \cdots f_k}\Wick{g_1 \cdots g_k}\right) = \sum_{S, S' \subset \left\{ 1, \cdots, k \right\}, |S| = |S'| = k-r} \mathbb{E} \left[\Wick{\prod_{i\in S}f_i}\Wick{\prod_{i\in S'}g_i}\right]\Wick{\prod_{i\in S^c}f_i \prod_{i\in S'^c}g_i    }.    \]

\end{lemm}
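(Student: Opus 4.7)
The plan is to prove the identity via the exponential generating function for Wick products. I recall the well-known identity $\Wick{\exp f} = \exp\!\bigl(f - \tfrac{1}{2}\E[f^2]\bigr)$ for $f \in H$, and more generally the expansion \eqref{polyw} shows that $\Wick{\exp(\sum_i t_i f_i)}$ is a formal power series in $t$ whose coefficient at the squarefree monomial $\prod_{i \in T} t_i$ is exactly $\Wick{\prod_{i \in T} f_i}$ for each $T \subset \{1,\ldots,k\}$. Extracting the coefficient of $t_1 \cdots t_k s_1 \cdots s_k$ from a suitable generating identity will then yield the claim.

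The first step is to compute the product $\Wick{\exp(\sum_i t_i f_i)}\cdot \Wick{\exp(\sum_j s_j g_j)}$ by writing both factors as ordinary exponentials and completing the square. Comparing with the Wick ordering of the joint exponential $\sum_i t_i f_i + \sum_j s_j g_j$, which absorbs all three types of quadratic terms, one finds
\[
\Wick{\exp\bigl(\textstyle\sum_i t_i f_i\bigr)}\cdot \Wick{\exp\bigl(\textstyle\sum_j s_j g_j\bigr)} = \exp\Bigl(\textstyle\sum_{i,j} t_i s_j \E[f_i g_j]\Bigr)\, \Wick{\exp\Bigl(\textstyle\sum_i t_i f_i + \sum_j s_j g_j\Bigr)}\;,
\]
because only the cross-terms $t_i s_j \E[f_i g_j]$ are not already part of the product of the individual Wick orderings.

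The second step is to extract the coefficient of $t_1 \cdots t_k s_1 \cdots s_k$ from both sides. The left side immediately gives $\Wick{f_1 \cdots f_k}\Wick{g_1 \cdots g_k}$. On the right, expanding $\exp(\sum c_{ij} t_i s_j)$ as $\sum_m \tfrac{1}{m!}(\sum c_{ij} t_i s_j)^m$, the coefficient of $\prod_{i \in S}t_i \prod_{j \in S'} s_j$ is nonzero only when $|S|=|S'|=m$, in which case the combinatorial factor $m!$ from ordering the product cancels the $1/m!$ and leaves $\sum_{\sigma \colon S \to S'} \prod_{i \in S} \E[f_i g_{\sigma(i)}]$, with $\sigma$ ranging over bijections. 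Multiplying by the corresponding coefficient $\Wick{\prod_{i\in S^c} f_i \prod_{j \in S'^c} g_j}$ of the Wick-ordered joint exponential and then applying Lemma~\ref{haii} to recognise the inner sum as $\E[\Wick{\prod_{i \in S} f_i}\Wick{\prod_{j \in S'} g_j}]$ yields the desired identity (terms with $|S|\neq|S'|$ can be added freely since their expectations vanish).

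The chaos decomposition follows immediately: each surviving summand contains $\Wick{\prod_{i \in S^c} f_i \prod_{j \in S'^c} g_j}$, which lies in $H^{:|S^c|+|S'^c|:} = H^{:2(k-|S|):}$ since the constraint $|S|=|S'|$ is enforced. Writing $r = k-|S|$ and using the orthogonality of the chaoses isolates the formula for $\pi_{2r}$. I do not anticipate a genuine obstacle: the whole argument reduces to the classical combinatorial identity for extracting coefficients from $\exp(\sum c_{ij} x_i y_j)$ together with Lemma~\ref{haii}. The only piece of bookkeeping requiring a little care is matching the $m!$ coming from expanding $(\sum c_{ij} t_i s_j)^m$ with the $1/m!$ of the exponential series, so that one obtains precisely an unweighted sum over bijections $\sigma \colon S \to S'$.
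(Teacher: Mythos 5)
Your proposal is correct. Note that the paper does not actually prove Lemma~\ref{saigo} at all: it simply refers to \cite{Jan97}, where the product (diagram) formula for Wick products is established by combinatorial arguments on the chaos decomposition. Your generating-function route is a legitimate, self-contained alternative: the identity $\Wick{\exp F}\,\Wick{\exp G}=e^{\E[FG]}\,\Wick{\exp(F+G)}$ with $F=\sum_i t_i f_i$, $G=\sum_j s_j g_j$ follows pointwise almost surely from $\Wick{\exp h}=\exp\bigl(h-\tfrac12\E[h^2]\bigr)$, which in turn is exactly the paper's defining relation \eqref{polyw} evaluated at $\boldsymbol{x}=(f(\omega),g(\omega))$; since the Hermite generating function converges for all arguments, both sides are real-analytic in $(t,s)$ for a.e.\ $\omega$, so extracting the coefficient of the squarefree monomial $t_1\cdots t_k s_1\cdots s_k$ is justified and is not merely a formal-power-series manipulation (you could make this one sentence explicit, but it is a routine point). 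Your bookkeeping is right: the $m!$ orderings cancel the $1/m!$ of the exponential, the inner sum over bijections $\sigma\colon S\to S'$ is identified with $\E\bigl[\Wick{\prod_{i\in S}f_i}\,\Wick{\prod_{j\in S'}g_j}\bigr]$ via Lemma~\ref{haii}, and the terms with $|S|\neq|S'|$ may be added for free since Wick products of different orders are orthogonal. The chaos statement and the formula for $\pi_{2r}$ then follow as you say. Compared with the combinatorial proof in \cite{Jan97}, your argument trades diagram counting for the algebra of Wick exponentials, which is arguably shorter and makes the cancellation structure transparent.
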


The following lemma is known as the hypercontractivity of Gaussian polynomials.
\begin{lemm}
Let $k \in \mathbb{N}$ and $p \in \mathbb{R}_+$. Then, there exists some constant $C_{p,k}$ such that for any $f \in H^{k} = \bigoplus_{n=0}^k H^{:n:}$, there holds 
\[  \mathbb{E} \left[ |f|^{2p}   \right] \le C_{p,k}  \mathbb{E} \left[  |f|^2  \right]^p       .\]

\end{lemm}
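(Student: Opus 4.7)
The plan is to derive this as a standard consequence of Nelson's hypercontractivity for the Ornstein--Uhlenbeck semigroup, which I would cite as a black-box input (e.g.\ \cite{Jan97}). Recall that the OU semigroup $\{T_t\}_{t\ge 0}$ acts diagonally on the Wiener chaos decomposition by $T_t g = e^{-nt} g$ for $g \in H^{:n:}$, and Nelson's theorem asserts that $T_t \colon L^2(\Omega,\P) \to L^{q}(\Omega,\P)$ has operator norm $1$ as soon as $e^{2t} \ge q-1$. All other ingredients needed are purely elementary.

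First I would treat a pure chaos $g \in H^{:n:}$. Fix $q = 2p$ and choose $t = t_p \coloneqq \tfrac12 \log(2p-1)$ (we may assume $p \ge 1$ since the case $p < 1$ follows by Jensen's inequality). Then $g = e^{nt_p} T_{t_p} g$ by the spectral action of the semigroup, and applying Nelson's bound yields
\begin{equ}
\|g\|_{L^{2p}(\Omega,\P)} = e^{nt_p}\,\|T_{t_p} g\|_{L^{2p}(\Omega,\P)} \le e^{nt_p}\,\|g\|_{L^{2}(\Omega,\P)} = (2p-1)^{n/2}\,\|g\|_{L^{2}(\Omega,\P)}\;.
\end{equ}

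Next I would combine this bound over chaoses. Given $f \in H^{k}$, write $f = \sum_{n=0}^k f_n$ with $f_n \coloneqq \pi_n f \in H^{:n:}$. By the triangle inequality in $L^{2p}$ and the previous step,
\begin{equ}
\|f\|_{L^{2p}(\Omega,\P)} \le \sum_{n=0}^k \|f_n\|_{L^{2p}(\Omega,\P)} \le \sum_{n=0}^k (2p-1)^{n/2}\,\|f_n\|_{L^{2}(\Omega,\P)}\;.
\end{equ}
Since the chaoses are mutually orthogonal in $L^2(\Omega,\P)$, one has $\|f_n\|_{L^2} \le \|f\|_{L^2}$ for each $n$, hence
\begin{equ}
\|f\|_{L^{2p}(\Omega,\P)} \le \Bigl(\sum_{n=0}^k (2p-1)^{n/2}\Bigr)\,\|f\|_{L^{2}(\Omega,\P)}\;,
\end{equ}
and raising both sides to the $2p$-th power yields the claim with $C_{p,k} = \bigl(\sum_{n=0}^k (2p-1)^{n/2}\bigr)^{2p}$.

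The only nontrivial content is Nelson's hypercontractivity bound itself, which is the standard obstacle in results of this form; however it is well-documented in the literature and entirely independent of the SPDE setting at hand, so I would simply invoke it rather than reprove it. Everything else is a one-line spectral identity on each chaos and a triangle inequality, so no further work is required.
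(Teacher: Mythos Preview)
Your argument is correct. Note however that the paper does not actually supply a proof of this lemma at all: it states the result as ``known as the hypercontractivity of Gaussian polynomials'' and refers the reader to \cite{Jan97} for the surrounding material. What you have written is precisely the standard derivation from Nelson's hypercontractivity of the Ornstein--Uhlenbeck semigroup that one finds in such references, so there is no meaningful divergence in approach---you have simply filled in the details that the paper deliberately omits.
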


\section{On convergences of infinite sums}

The following is elementary

\begin{lemm}\label{exponents}
Let $\alpha_i > -d$ for $i=1,\ldots,k$ be such that
$\sum_{i=1}^k \alpha_i < -(k-1)d$. Then, for every non-empty subset $S \subset \{1,\ldots,k\}$,
one has $\sum_{i\in S} \alpha_i < -(|S|-1)d$.
\end{lemm}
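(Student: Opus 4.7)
The plan is to exploit the complementary sum. Since the hypotheses provide both a pointwise lower bound ($\alpha_i > -d$ for each $i$) and an aggregate upper bound ($\sum_{i=1}^k \alpha_i < -(k-1)d$), the natural move for any subset $S$ is to write
\begin{equation*}
\sum_{i \in S} \alpha_i = \sum_{i=1}^k \alpha_i \;-\; \sum_{i \in S^c} \alpha_i,
\end{equation*}
with $S^c = \{1,\ldots,k\} \setminus S$, and then bound the two pieces separately, using the aggregate bound on the first and the pointwise bound on the second.

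First I would dispose of the trivial case $S = \{1,\ldots,k\}$, in which case $S^c = \emptyset$ and the claim is literally the stated hypothesis. For a non-empty proper subset $S$, the complement $S^c$ is non-empty, and applying the strict inequality $\alpha_i > -d$ to each of the $|S^c| = k-|S|$ indices in $S^c$ yields
\begin{equation*}
\sum_{i \in S^c} \alpha_i \;>\; -(k-|S|)\,d.
\end{equation*}
Substituting into the displayed identity and combining with the hypothesis $\sum_{i=1}^k \alpha_i < -(k-1)d$ gives
\begin{equation*}
\sum_{i \in S} \alpha_i \;<\; -(k-1)d + (k-|S|)\,d \;=\; -(|S|-1)\,d,
\end{equation*}
which is exactly the claimed bound.

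There is no genuine obstacle here; the argument is a one-line arithmetic manipulation. The only subtlety worth flagging is that the strict inequality in the lower bound on $\sum_{i \in S^c}\alpha_i$ really does require $S^c$ to be non-empty (so that at least one strict bound $\alpha_i > -d$ is being summed), which is precisely why the case $S = \{1,\ldots,k\}$ must be split off and handled by direct appeal to the hypothesis.
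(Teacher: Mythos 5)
Your proof is correct and rests on the same elementary idea as the paper's: the paper organises the complement subtraction as a downward induction, removing one index $j \in S^c$ at a time and using $\alpha_j > -d$, whereas you subtract the whole complement $S^c$ in one step, which is arithmetically identical. Your explicit treatment of the boundary case $S = \{1,\ldots,k\}$ (where the claim is just the hypothesis) is also the right observation, so there is nothing to add.
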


\begin{proof}
The bound holds for $S = \{1,\ldots,k\}$ by assumption. Assume now that it holds for some non-empty $S$
and consider $S' = S \setminus \{j\}$ for some $j \in S$.
It follows that  $\sum_{i\in S'} \alpha_i = (\sum_{i\in S} \alpha_i) - \alpha_j
< -(|S|-1)d + d = -(|S'|-1)d$, and the claim follows.
\end{proof}

First, we recall the following basic estimates on discrete convolutions, see \cite[Lemma 2.3]{GKO24} for example.
\begin{lemm}\label{gre}
For $\alpha,\beta\in \R_-$ such that $\alpha,\beta> -d$ and $\alpha + \beta < -d$, the bound
\begin{equ}
\sum_{\ell \in \mathbb{Z}^d } \langle \ell \rangle^{\alpha} \langle k-\ell \rangle^{\beta}  \lesssim \langle k \rangle^{d + \alpha +  \beta  }\;,
\end{equ}
holds uniformly over $k \in \Z^d$.
\end{lemm}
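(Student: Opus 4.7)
The plan is to reduce the discrete convolution estimate to four elementary summation bounds by splitting $\Z^d$ into regions where one of the two factors is essentially comparable to $\langle k\rangle$. Throughout, we may assume $k\neq 0$, since for $k=0$ the claim reduces to $\sum_\ell \langle \ell\rangle^{\alpha+\beta} \lesssim 1$, which holds because $\alpha+\beta<-d$.

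First, I would introduce the three-fold decomposition $\Z^d = R_a \cup R_b \cup R_c$, where
\begin{align*}
R_a &= \{\ell : |\ell| \le |k|/2\}, \qquad R_b = \{\ell : |k-\ell| \le |k|/2\}, \\
R_c &= \{\ell : |\ell| > |k|/2 \text{ and } |k-\ell| > |k|/2\}\;.
\end{align*}
On $R_a$ one has $\langle k-\ell\rangle \simeq \langle k\rangle$, so
$\sum_{R_a} \langle \ell\rangle^\alpha \langle k-\ell\rangle^\beta \simeq \langle k\rangle^\beta \sum_{|\ell|\le |k|/2} \langle \ell\rangle^\alpha \lesssim \langle k\rangle^{d+\alpha+\beta}$,
where the partial-sum bound uses precisely $\alpha>-d$. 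Region $R_b$ is symmetric under the change of variables $m=k-\ell$ and yields the same bound.

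The region $R_c$ I would split once more, since it extends to infinity: set
$R_c = R_c' \cup R_c''$ with $R_c' = R_c\cap\{|\ell|\le 2|k|\}$ and $R_c'' = \{|\ell|>2|k|\}$.
On $R_c'$ one has $\langle \ell\rangle \simeq \langle k\rangle$, so after the substitution $m=k-\ell$ (whose image lies in $|m|\le 3|k|$),
\[
\sum_{R_c'} \langle \ell\rangle^\alpha \langle k-\ell\rangle^\beta \lesssim \langle k\rangle^\alpha \sum_{|m|\le 3|k|} \langle m\rangle^\beta \lesssim \langle k\rangle^{d+\alpha+\beta}\;,
\]
again using $\beta>-d$ for the partial-sum estimate. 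On $R_c''$ one has $\langle k-\ell\rangle \simeq \langle \ell\rangle$, hence
$\langle \ell\rangle^\alpha \langle k-\ell\rangle^\beta \simeq \langle \ell\rangle^{\alpha+\beta}$; since $\alpha+\beta<-d$, the tail bound
$\sum_{|\ell|>2|k|} \langle \ell\rangle^{\alpha+\beta} \lesssim \langle k\rangle^{d+\alpha+\beta}$ holds.

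Summing the four contributions gives the claim. No substantial obstacle is expected: all three hypotheses ($\alpha>-d$, $\beta>-d$, $\alpha+\beta<-d$) are consumed, one in each of the three types of partial sum above, and the constants involved are uniform in $k$. The only minor bookkeeping point is verifying that $|k-\ell|\simeq |k|$ on $R_a$ (which is immediate from $|k-\ell|\ge |k|-|\ell|\ge |k|/2$) and its counterpart $\langle \ell\rangle\simeq\langle k\rangle$ on $R_c'$ (which follows from $|k|/2<|\ell|\le 2|k|$).
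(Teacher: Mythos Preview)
Your proof is correct: the four-region decomposition is the standard route, each hypothesis is used exactly where you say, and the bookkeeping on $R_c''$ works because $\{|\ell|>2|k|\}$ is already contained in $R_c$ when $k\neq 0$. The paper does not actually prove this lemma but simply cites \cite[Lemma 2.3]{GKO24}, so there is no in-paper argument to compare against; your write-up is precisely the kind of elementary argument that reference would provide.
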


The following result follows easily.
\begin{lemm}\label{lem:loop}
Let $\ell \ge 2$, let $k \in \N^\ell$ and set $K = \{(i,j) \in \N^2\,:\, i \le \ell\;\&\; j \le k_i\}$, let
$\alpha \in \R_-^K$ such that, for all $i$, one has
\begin{equ}[e:assalphaij]
\alpha_{ij} > -d\;,\qquad \sum_{j \le k_i} (\alpha_{ij} + d) < d\;,
\end{equ}
as well as
\begin{equ}[e:asssumalpha]
\sum_{i\le\ell} \alpha_i < -d\;,\quad   \alpha_i \eqdef {(k_i-1)d + \sum_{j \le k_i}\alpha_{ij}}\;.
\end{equ}
Write furthermore $\CP$ for the set of all elements $p \in (\Z^d)^K$ such that, for every $i,m \le \ell$,
one has $\sum_{j \le k_i} p_{ij} =  \sum_{j \le k_m}p_{mj}$.
Then, one has
\begin{equ}
\sum_{p \in \CP} \prod_{(i,j) \in K} \scal{p_{ij}}^{\alpha_{ij}} < \infty\;.
\end{equ}
\end{lemm}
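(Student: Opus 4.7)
The plan is to decouple the $\ell$ rows of $K$ by summing over their common value. Every $p \in \CP$ is specified by choosing $q \in \Z^d$ together with, for each $i$, a tuple $(p_{i1},\ldots,p_{ik_i})$ summing to $q$, so the sum factorises as
\begin{equation*}
\sum_{p \in \CP}\prod_{(i,j)\in K}\scal{p_{ij}}^{\alpha_{ij}} \;=\; \sum_{q \in \Z^d}\prod_{i=1}^\ell S_i(q)\;, \qquad S_i(q) \coloneqq \sum_{\substack{p_{i1},\ldots,p_{ik_i}\in\Z^d \\ p_{i1}+\cdots+p_{ik_i}=q}}\prod_{j=1}^{k_i}\scal{p_{ij}}^{\alpha_{ij}}\;.
\end{equation*}

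The key step is to show $S_i(q)\lesssim \scal{q}^{\alpha_i}$ for each $i$, which I would prove by iterated application of the discrete convolution estimate Lemma~\ref{gre}. When $k_i=1$ there is nothing to sum. For $k_i\geq 2$, I would contract the variables $p_{i1},p_{i2},\ldots$ one at a time: after $m-1$ applications of Lemma~\ref{gre}, the partial sum is bounded by $\scal{r}^{\gamma_m}$ with $\gamma_m \coloneqq \sum_{j=1}^m\alpha_{ij}+(m-1)d$ and $r$ the remaining free variable. Folding in the next factor $\scal{p_{i,m+1}}^{\alpha_{i,m+1}}$ requires both $\gamma_m$ and $\alpha_{i,m+1}$ to lie in $(-d,0)$ and their sum to be strictly less than $-d$. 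The lower bounds $>-d$ are immediate from $\alpha_{ij}>-d$, while the upper bounds amount to $\sum_{j\in S}\alpha_{ij} < -(|S|-1)d$ for $|S|=m$ and $|S|=m+1$; these are precisely the subset inequalities delivered by Lemma~\ref{exponents}, whose hypothesis $\sum_{j\leq k_i}\alpha_{ij} < -(k_i-1)d$ is nothing but the second half of \eqref{e:assalphaij} rearranged. Iterating through $m = k_i$ yields $S_i(q)\lesssim \scal{q}^{\alpha_i}$.

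Putting the row-wise bounds back into the factorisation gives $\sum_{p \in \CP}\prod\scal{p_{ij}}^{\alpha_{ij}} \lesssim \sum_{q\in\Z^d}\scal{q}^{\sum_i \alpha_i}$, which is finite precisely when $\sum_i\alpha_i < -d$, i.e.\ the hypothesis~\eqref{e:asssumalpha}. The main technical point\dash essentially the only one\dash is the bookkeeping in the iterated convolution; Lemma~\ref{exponents} is tailor-made so that the hypotheses of Lemma~\ref{gre} are satisfied at every step, and I expect no further surprises.
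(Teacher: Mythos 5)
Your proof is correct and follows essentially the same route as the paper: iterated application of the discrete convolution estimate (Lemma~\ref{gre}) within each row to obtain the bound $\scal{q}^{\alpha_i}$, followed by summation over the common value $q$ using $\sum_i\alpha_i<-d$. Your explicit bookkeeping via Lemma~\ref{exponents} simply spells out the induction the paper leaves to the reader.
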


\begin{proof}
Fix some $i$ and note first that, by induction on $k_i$, one easily deduces from
Lemma~\ref{gre} that, for any $q \in \Z^d$, one has 
\begin{equ}[e:boundpi]
\sum_{p \in L_q^{(i)}} \prod_{j \le k_i} \scal{p_{j}}^{\alpha_{ij}}  \lesssim \scal{q}^{\alpha_i}\;,    
\end{equ}
where we wrote $L_q^{(i)}$ for the set of $p \in (\Z^d)^{k_i}$ such that $\sum_{j \le k_i} p_{j} = q$ for every $j$. 
As a consequence, the expression in the lemma is bounded by $\sum_{q\in \Z^d} \scal{q}^{\sum_i \alpha_i}$.
Since $\sum_i\alpha_i < -d$ by assumption, the claim follows.
\end{proof}

For our purpose, it will be convenient to have a similar result, but with some of the summation variables
restricted to $|p_{ij}| \le N$. 

\begin{lemm}\label{lem:boundDivergent}
Let $\ell$, $k$, $K$, $\alpha$ be as in Lemma~\ref{lem:loop}, but we assume that there exists $\bar \ell < \ell$ such that  $\alpha_i < 0$ only for $i \le \bar \ell$, and we do not assume \eqref{e:asssumalpha}. 
For $i > \bar \ell$ on the other hand, we assume that $\alpha_{ij} > 0$ for every $j$.
Finally, we assume that $\sum_{i \le \bar \ell}\alpha_i > -d$.
Writing $\CP_N$ for the set of $p \in \CP$ such that $|p_{ij}| \le N$ for $i > \bar \ell$,
one then has
\begin{equ}[e:wantedBound]
\sum_{p \in \CP_N} \prod_{(i,j) \in K} \scal{p_{ij}}^{\alpha_{ij}} \lesssim N^{\gamma}\;,
\end{equ}
where $\gamma = d + \sum_{i\le \ell} \alpha_i$.
\end{lemm}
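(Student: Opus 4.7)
The plan is to split the sum over $\CP_N$ according to the common value $q \eqdef \sum_{j \leq k_i} p_{ij}$ (which is well-defined and independent of $i$ by the definition of $\CP$), and then to handle the two types of indices separately. For the ``low'' indices $i \leq \bar\ell$ I would sum freely over $\Z^d$ using the iterated convolution argument already used in Lemma~\ref{lem:loop}, while for the ``high'' indices $i > \bar\ell$ I would exploit the truncation $|p_{ij}| \leq N$ together with the assumed positivity $\alpha_{ij} > 0$ to reduce matters to trivial lattice-point counting.

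Concretely, for each $i \leq \bar\ell$ and each fixed $q$, the hypothesis \eqref{e:assalphaij} (inherited from Lemma~\ref{lem:loop}) will allow iterated applications of Lemma~\ref{gre} exactly as in the derivation of \eqref{e:boundpi}, yielding
\[
\sum_{\substack{(p_{ij})_{j \leq k_i}\\ \sum_j p_{ij} = q}} \prod_{j \leq k_i} \scal{p_{ij}}^{\alpha_{ij}} \lesssim \scal{q}^{\alpha_i}\;.
\]
For each $i > \bar\ell$, I would use that $\alpha_{ij} > 0$ and $|p_{ij}| \leq N$ imply $\scal{p_{ij}}^{\alpha_{ij}} \lesssim N^{\alpha_{ij}}$, and that the number of tuples in $([-N,N]^d)^{k_i}$ with prescribed sum $q$ is at most of order $N^{(k_i-1)d}$ (since $k_i-1$ components are free). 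This gives a contribution of order $N^{(k_i-1)d + \sum_j \alpha_{ij}} = N^{\alpha_i}$, uniformly in $q$.

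Combining the two, and noting that the truncation for any fixed $i > \bar\ell$ (such an $i$ exists because $\bar\ell < \ell$) forces $|q| \lesssim N$ by the triangle inequality, the estimate would reduce to
\[
\sum_{p \in \CP_N} \prod_{(i,j) \in K} \scal{p_{ij}}^{\alpha_{ij}} \lesssim N^{\sum_{i > \bar\ell}\alpha_i} \sum_{|q| \lesssim N} \scal{q}^{\sum_{i \leq \bar\ell}\alpha_i}\;.
\]
The remaining $q$-sum is controlled using the hypothesis $\sum_{i \leq \bar\ell}\alpha_i > -d$, which yields $\sum_{|q| \lesssim N} \scal{q}^{\sum_{i \leq \bar\ell}\alpha_i} \lesssim N^{d + \sum_{i \leq \bar\ell}\alpha_i}$, and collecting exponents produces precisely $N^\gamma$ with $\gamma = d + \sum_{i \leq \ell} \alpha_i$, as required.

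I do not anticipate any serious obstacle here: the proof is essentially a bookkeeping exercise in which the two regimes decouple cleanly thanks to the assumed positivity of $\alpha_{ij}$ for $i > \bar\ell$. The only slight point of care is to verify that the truncation on the high-index variables genuinely propagates to the constraint $|q| \lesssim N$, but this follows at once from the triangle inequality applied to $q = \sum_j p_{ij}$.
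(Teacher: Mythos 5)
Your proof is correct and is essentially the paper's own argument: fix the common value $q$, use the convolution bound \eqref{e:boundpi} for the indices $i\le\bar\ell$, bound the truncated blocks $i>\bar\ell$ by $N^{\alpha_i}$ via $\scal{p_{ij}}\lesssim N$ and lattice-point counting, and finally sum $\scal{q}^{\sum_{i\le\bar\ell}\alpha_i}$ over $|q|\lesssim N$ using $\sum_{i\le\bar\ell}\alpha_i>-d$. The only difference is that you spell out the counting $|L_{q,N}^{(i)}|\lesssim N^{(k_i-1)d}$ and the final $q$-sum, which the paper leaves implicit.
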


\begin{proof}
As in the proof of Lemma~\ref{lem:loop}, we fix some $q \in \Z^d$.
We then obtain again the bound \eqref{e:boundpi} for 
$i \le \bar \ell$, while for $i > \bar \ell$ we use the fact that
$\scal{p_{ij}} \lesssim N$ to conclude that 
\begin{equ}
\sum_{p \in L_{q,N}^{(i)}} \prod_{j \le k_i} \scal{p_{j}}^{\alpha_{ij}} \lesssim 
|L_{q,N}^{(i)}| \prod_{j \le k_i} N^{\alpha_{ij}} \lesssim N^{\alpha_{i}}\;,
\end{equ}
where $\alpha_i$ is as before and $L_{q,N}^{(i)} = \{p \in L_{q}^{(i)}\,:\, |p_j| \le N\;\forall j\}$.
As a consequence, the left-hand side of \eqref{e:wantedBound} is bounded by
\begin{equ}
\sum_{q \in \Z^d\,:\, |q| \lesssim N} \Bigl(\prod_{i \le \bar \ell}\scal{q}^{\alpha_i}\Bigr)\Bigl(\prod_{i > \bar \ell}N^{\alpha_i}\Bigr)\;, 
\end{equ}
from which the desired bound follows at once.
\end{proof}

\section*{Acknowledgements}
SK and HN thank Masato Hoshino for a fruitful discussion. MH would like to thank
Christophe Garban, Antti Kupiainen, and Jonathan Mattingly for inspiring discussions on this topic.
This work is supported by JSPS KAKENHI Grant Numbers 21H00988, 	23K20801, 24KJ1329, and JST SPRING Grant Number JPMJSP2110.

\bibliographystyle{Martin}
\bibliography{orthogonal.bib}


\end{document}